\newtheorem{theorem}{Theorem}[section]
\theoremstyle{definition}
\newtheorem{definition}[theorem]{Definition}
\newtheorem{example}[theorem]{Example}
\theoremstyle{remark}
\newtheorem{remark}[theorem]{Remark}
\numberwithin{equation}{section}
\def\C{{\mathbb C}}
\def\Z{{\mathbb Z}}
\def\Im{\mathop{\rm Im}}
\newcommand{\beqnn}{\begin{equation}}
\newcommand{\eeqnn}{\end{equation}}
\newcommand{\eb}{\begin{enumerate}}
\newcommand{\ee}{\end{enumerate}}
\newcommand{\bbm}{\begin{bmatrix}}
\newcommand{\ebm}{\end{bmatrix}}
\newcommand{\bpm}{\begin{pmatrix}}
\newcommand{\epm}{\end{pmatrix}}
\newcommand{\ra}{\rightarrow}
\newcommand{\ts}{\textstyle}
\newcommand{\bi}{\begin{itemize}}
\newcommand{\ei}{\end{itemize}}
\newcommand{\beq}{\begin{eqnarray*}}
\newcommand{\eeq}{\end{eqnarray*}}
\def\sn{\mathop{\rm sn}} \def\sc{\mathop{\rm sc}} \def\sd{\mathop{\rm sd}}
\def\cn{\mathop{\rm cn}} \def\cs{\mathop{\rm cs}} \def\cd{\mathop{\rm cd}}
\def\dn{\mathop{\rm dn}} \def\dc{\mathop{\rm dc}} \def\ds{\mathop{\rm ds}}
\def\nd{\mathop{\rm nd}} \def\nc{\mathop{\rm nc}} \def\ns{\mathop{\rm ns}}
\definecolor{darkgreen}{rgb}{0,0.6,0.1}
\newcommand{\beqq}{\begin{eqnarray}}
\newcommand{\eeqq}{\end{eqnarray}}
\newcommand{\beqn}{\begin{eqnarray}}
\newcommand{\eeqn}{\end{eqnarray}}
\begin{document}

\title{Symmetries of the Darboux Equation}

\author{Yik-Man Chiang}
\address{Department of Mathematics, Hong Kong University of Science and Technology,
Clear Water Bay, Kowloon, Hong Kong}
\email{machiang@ust.hk}
\thanks{The first and third authors are partially supported by Hong Kong Research Grant Council project no. 601111 and 16300814}

\author{Avery Ching}
\address{Department of Mathematics, Hong Kong University of Science and Technology, Clear Water Bay, Kowloon, Hong Kong}
\email{maaching@ust.hk}

\author{Chiu-Yin Tsang}
\address{Department of Mathematics, Hong Kong University of Science and Technology, Clear Water Bay, Kowloon, Hong Kong}
\email{macytsang@ust.hk}


\subjclass{Primary 33E10; Secondary 34M35}

\date{May 16, 2017}


\keywords{Darboux equation, Lam\'e equation}

\begin{abstract}
This paper establishes the symmetries of Darboux's equations (1882) on tori.
We extend Ince's work (1940) by developing new infinite series expansions in terms of Jacobi elliptic
functions around each of the four regular singular points of the
Darboux equation which are located at the four half-periods of the torus. The
symmetry group of the Darboux equation is given by the Coxeter group
$B_4\cong G_{\mathrm{I}}\rtimes_{\Gamma} G_{\mathrm{II}}$ where the
actions of $G_{\mathrm{I}}$ correspond to the sign changes of the parameters in the
solutions of the equations, which have no effective changes on the
equation itself, while the actions of $G_{\mathrm{II}}$ permute the four
half-periods of the torus, which are described by a short-exact sequence.
We are able to clarify the symmetries of the equation when ordered bases of the underlying torus change.
Our results show that it is much more transparent to
consider the symmetries of the Darboux equation on a torus than on the Riemann sphere $\mathbb{CP}^1$
as was usually considered by earlier researchers such as
Maier (2007). We list the symmetry tables of the Darboux equation in
both the Jacobian form and Weierstrass form. A consolidated list of 192
solutions of the Darboux equation is also given. We then consider the
symmetries of several classical equations (e.g. Lam\'e equation) all of which are
special cases of the Darboux equation. Those terminating solutions of the Darboux equation generalize the classical Lam\'e polynomials.
\end{abstract}

\maketitle



\tableofcontents

\section{Introduction}
\subsection{The Darboux equation 
}

The symmetries of the classical Lam\'e equation \cite{Lame} (1837)
		\beqn 
			\frac{{d}^{2}y}{{du}^{2}}+\big(h-\nu(\nu+1)k^2{\ts\sn^2u}\big)y=0,
		\eeqn
was first discussed by Arscott and Reid \cite[(1971)]{AR}\footnote{Although the discussion of geometric reasoning was absent, namely the ordered bases of the underlying torus. See \S3.}
 despite the long history of the equation.
In this case the symmetry group is  isomorphic to $S_3$ {(see \S\ref{LameEqn} for more detail)}.

We characterize the symmetries of the Darboux equation \cite{Darboux} (1882) \footnote{ Also known as {D}arboux-{T}reibich-{V}erdier equation \cite{Veselov}.} 
\beqn\label{E:darboux}
&&\frac{{d}^{2}y}{{du}^{2}}+\Big[h-{\xi(\xi+1)}\,{\ts\ns^2(u,k)}-{\ts\eta(\eta+1)\dc^2(u,k)}\nonumber\\&&\ \hspace{2cm} -
{\ts\mu(\mu+1)k^2\cd^2(u,k)}-\nu(\nu+1)k^2{\ts\sn^2(u,k)}\Big]\,y=0,
\eeqn
which includes the Lam\'e equation as a special case, in this paper to be isomorphic to the \textit{hyperoctahedral (Coxeter) group} $B_4$, where $\ns(u,\,k),\, \dc(u,\, k)$, $\cd(u,\, k),\,\sn(u,\, k)$ are Jacobi elliptic functions with elliptic modulus $k$
($k^2\neq 0,\pm1)$.
Notice that $\ns^2(u,k)$, $\dc^2(u,k)$, $\cd^2(u,k)$, $\sn^2(u,k)$ are doubly periodic functions with periods
$\omega_1=2K(k),\, \omega_2=2iK'(k)$ and have a double pole at $u=0,\, K(k),\, K(k)+iK'(k)$ and $iK'(k)$ respectively on the fundamental parallelogram
$P(\omega_1,\omega_2)=\{s\omega_1+t\omega_2:s,\, t\in[0,\, 1)\}$. The Darboux equation (\ref{E:darboux}) is defined on a torus of $\mathbb{C}$ modulo the lattice $\Lambda=\{m\omega_1+n\omega_2:m,n\in\mathbb{Z}\}$ , which
can be specified by the Riemann $P$-scheme
	\[
		P_{\mathbb{C}\slash\Lambda}
			\begin{Bmatrix}
\ 0\ &\ K(k)\ &\ K(k)+iK'(k)\ &\ iK'(k)\ &\\
\xi+1&\eta+1&\mu+1&\nu+1&u;\, h\\
-\xi&-\eta&-\mu&-\nu&
			\end{Bmatrix},
	\]
where the entries on the top row represent the locations of the regular singularities and the entries of the succeeding two rows under the corresponding singularities represent the two exponents of the local solutions there, and $u$, $h$ are the independent variable and the accessory parameter respectively.  We have obtained local series solutions about the singularities on a torus by developing an infinite series expansion theory in terms of
Jacobi elliptic functions. For example, the series expansion about the singular point $u=0$ with the exponent $\xi+1$ is given by
	\[
		\sn(u,k)^{\xi+1}\cn(u,k)^{\eta+1}\dn(u,k)^{\mu+1}\sum_{m=0}^\infty C_m\sn(u,k)^{2m},
	\]
where the coefficients $C_m$ satisfy a certain three-term recursion relation and the series converges in certain region on the torus (see \S\ref{S:expansion}). The idea of developing such a series expansion solution goes back to Ince \cite[(1940)]{Ince1} in a study of the well-known Lam\'e equation which has only one regular singular point on a torus.  We have adopted this idea for the Darboux differential equation which has four elliptic coefficients and each of which contributes a regular singular point within its fundamental region. We generate a total of 192 local series solutions in terms of Jacobi elliptic functions by the symmetry group of the Darboux equation to be explored in this paper.

Maier {(\cite[Theorem 4.3]{Maier1}, \cite[Theorem 3.1]{Maier})} has recently clarified that the automorphism group of the second-order Fuchsian type differential equation (FDE) {in asymmetric form} with $n\geq 3$ regular singular points {$0,\ 1,\ \infty,\ a_1,\cdots,\ a_{n-3}$} in $\mathbb{CP}^1$
is the Coxeter group $B_n\cong (\mathbb{Z}_2)^n\rtimes_{\Gamma} ~S_n$
(or {$D_{n}\cong (\mathbb{Z}_2)^{n-1}\rtimes_{\Gamma} ~S_n$} if the transformation $[\infty]_-$  interchanging  the exponents $\alpha,\beta$ at $\infty$ is excluded). {While for FDE in symmetric form, the action of the part $({\mathbb Z}_2)^n$ on the equation is trivial.}
When $n=3$, a FDE in asymmetric form is the Gaussian hypergeometric equation, which has three regular singular points $\{0,\, 1,\, \infty\}$ on $\mathbb{CP}^1$. Its symmetry group is well-known to be isomorphic to {$S_4\cong (\mathbb{Z}_2)^2\rtimes_{\Gamma} ~S_3$} (if $[\infty]_-$ is excluded), and its 24 local solutions were worked out by Kummer (see for example, Whittaker and Watson \cite[pp. 284-285]{WW}). In the case of the Heun equation  \cite[(1889)]{Heun},  which corresponds to the FDE in asymmetric form with four regular singular points $\{0,\,1,\, a,\, \infty\}$, has its symmetry group isomorphic to $(\Z_2)^4\rtimes_{\Gamma} S_4$ (if $[\infty]_-$ is included). Maier also provided a complete list of the 192 local solutions and the corresponding accessory parameters. We exhibit in the case of $n=4$ that it is more natural, as far as the symmetry is concerned, to consider Fuchsian differential equations on a torus $\mathbb{C}\slash\Lambda$ and to place the four regular singular points at the corresponding four half-periods.

The Heun equation
	\begin{equation}
		\label{heun}
			\frac{d^2y}{dt^2}+\Big(\frac{\gamma}{t}+\frac{\delta}{t-1}+\frac{\epsilon}{t-a}\Big)\frac{dy}{dt}+
\frac{\alpha\beta t-q}{t(t-1)(t-a)}y=0,
	\end{equation}
where the parameters satisfy the Fuchsian constraint $\alpha+\beta-\gamma-\delta-\epsilon+1=0$
that Maier considered has the Riemann $P$-scheme
	\[
		P_{\mathbb{CP}^1}
		\begin{Bmatrix}
\ 0\ &\ 1\ &\ a\ &\ \infty\ &\\
0&0&0&\alpha&t;\, q\\
1-\gamma&1-\delta&1-\epsilon&\beta&
	\end{Bmatrix}.
	\]

Although the Heun equation is connected to the Darboux equation by a simple change of dependent and independent variables,
we can replace the symmetry group  $(\Z_2)^4\rtimes_{\Gamma} S_4$ for the Heun equation, found by Maier, by $G_{\mathrm{I}}\rtimes_{\Gamma} G_{\mathrm{II}}$ for the Darboux equation, where the group $G_\mathrm{I}\cong (\Z_2)^4$ whose action is analogues to that of the Heun equation, while
the group $G_{\mathrm{II}}$ satisfies the short-exact sequence
	$$
		0\to K\to G_{\mathrm{II}}\to\mbox{anh}\to 0.
	$$
Here the group $K$ denotes the \textit{Klein four-group} which plays the role of translations of the half-periods (and so on the regular singular points) on the torus, while the $\mathrm{anh}$ denotes the \textit{anharmonic group} which is isomorphic to $\mathrm{Aut(X(2))}$, where $\mathrm{X}(2)=\mathbb{H}/\Gamma(2)$ and $\mathbb{H}$ is the upper half-plane of $\mathbb{C}$ (see \cite[p. 9]{Lehner1964}). The group $\mathrm{Aut(X(2))}$ is
parametrized by the quotient of the \textit{full-modular group} $\Gamma=\textrm{SL}(2,\, \mathbb{C})$ by $\Gamma(2)$, the principal congruence subgroup of level 2, which plays the role of permutations of the half-periods of the torus. The group $G_{\mathrm{II}}$ thus describes the combined actions of the half-periods of $K$ and  $\mathrm{anh}$. This paradigm allows us to see the symmetry group $G_\mathrm{II}$ acting on the half-period of the torus in a natural way. Hence it is most natural to consider Fuchsian equations with four regular singular points on a torus.  Although it is intrinsic in Maier's work  that one can consider the six equivalent classes of arrangements of the four regular singular points in $\mathbb{CP}^1$, with each of these classes characterized by the cross-ratio of the four singular points, it is much more subtle and less natural compared to putting the four points at the four half-period of a torus.
We also exhibit that Maier's hyper-Kummer group $\mathfrak{K}$ is isomorphic to our group~$G_\mathrm{II}$.

Moreover, the stumbling block (see Maier \cite[p. 812]{Maier1}) of describing the accessory parameters (eigenvalues) corresponding to the 192 local solutions of the Heun equation is reduced to only six classes (Table \ref{T2}) when considered in the Jacobian form (\ref{E:darboux}) of the Darboux equation, and to a mere of three classes (Table \ref{TT1}) when considered in the Weierstrass form (\ref{E:darbouxW}). This is because the Heun equation is written in  \textit{asymmetric form} (as in the case of the hypergeometric equation), while the Darboux equation \eqref{E:darboux} that is commonly refereed to in the literature is written in the \textit{symmetric form}. The part $(\mathbb{Z}_2)^{n-1}$ of its symmetry group  $D_{n}\cong (\mathbb{Z}_2)^{n-1}\rtimes_{\Gamma} ~S_n$ of a Fuchsian equation on $\mathbb{CP}^1$ in the symmetric form does not affect its accessory parameter. Although the Darboux equation in both the Jacobian form and Weierstrass form are in symmetric forms, these reductions are still substantially more intuitive compared to the symmetric form of the Heun equation as described in \cite[Eqn(2.2)]{Maier1}. We shall consider both the Jacobian and Weierstrass forms of the Darboux equation (in the symmetric form) in this paper. We have listed the corresponding asymmetric form in Jacobian form of the Darboux equation in Appendix \ref{S:Sparre}. Another equation named after Sparre (1883) is related to the Darboux equation in Jacobian form is parallel to the Papperitz form of the hypergeometric equation (see Whittaker and Watson \cite[p. 206]{WW} or Poole \cite[p. 86]{Poole}) will also be discussed there. Both the symmetric and the asymmetric forms of the Darboux equation are special cases of the Sparre equation.

The subgroup $G_\mathrm{II}$ acts on Weierstrass form in a simple manner while its action is more subtle on the Jacobian form. However, the study of the solutions of the Darboux equation in Jacobian form is preferred. It is because formulas of Jacobi elliptic functions are better developed (see e.g. \cite[XXII]{WW}, \cite{BF}), which make practical computation easier. A subtle point in considering the symmetries of the Darboux equation arises is that the group actions from the $G_\mathrm{II}$ would permute the half-periods the torus $\mathbb{C}\slash \Lambda$
and hence causing changes of the \textit{apparent} underlying space $\mathbb{C}\slash \Lambda$ on which the equation is defined and simultaneously while they act on the equation itself. Therefore, we address how we could make this ambiguity precise by constructing a proper underlying space on which the equations live in \S\ref{tran1} when discussing the symmetries of the Darboux equation in general. Such ambiguity does not arise when considering the symmetries of the Heun equation over $\mathbb{CP}^1$.

The Darboux equation had been forgotten by the general mathematics community until it was rediscovered for over a century later, by Treibich and Verdier \cite{TV,Verdier} (without knowing Darboux's work) in the context of the finite-gap theory and algebraic geometry. Indeed, there is a simple conversion between the Jacobi elliptic functions and the Weierstrass elliptic function $\wp(u,\tau)$. The Darboux equation becomes
\beqn\label{E:darbouxW}
&&\frac{{d}^{2}y}{{du}^{2}}+\Big[h-{\xi(\xi+1)}{\ts\wp(u;\tau)}-{\ts\eta(\eta+1)\wp(u+\omega_1;\tau)}\nonumber\\&&\ \hspace{2cm} -
{\ts\mu(\mu+1)\wp(u+\omega_2;\tau)}-\nu(\nu+1){\wp(u+\omega_3;\tau)}\Big]y=0,
\eeqn
where $\omega_1,\, \omega_2,\, \omega_3$ are the half-periods of the corresponding torus.
It is in this context that the Darboux equation was rediscovered by Treibich and Verdier \cite{TV,Verdier} that the elliptic potential in (\ref{E:darbouxW}) is a finite-gap potential if and only if all four parameters $\xi,\, \eta,\, \mu,\, \nu$ are integers. We refer to \cite[\S 1]{GW2} for a brief history.

Let us consider a family of elliptic curves over the upper half-plane $\mathbb{H}$ whose
fibre at $\tau$ (together with the point at infinity) is the zero-locus of
$$
F(x,\, y)=y^2-\big(4x^3-g_2(\tau)x-g_3(\tau)\big).
$$

Explicitly, we have a family $\mathbb{E}$ (together with the point at infinity)
$$
\mathbb{E}=\big\{(x,y;\tau)\in\C^2\times \mathrm{X}(2):y^2=4x^3-g_2(\tau)\,x-g_3(\tau)\big\}
$$
on $\mathbb{CP}^2$.
The Weierstrass elliptic function $\wp$ can now be regarded as a function defined on $\mathbb{E}$, which is just the projection to the first coordinate. On the one hand, this illustrates that both the independent variable and (half-)periods of the (\ref{E:darbouxW}) are changed when acted upon by its symmetry group. This aspect is less apparent from the Jacobian form (\ref{E:darboux}) compared with the Weierstrass form (\ref{E:darbouxW}) of the Darboux equation (as the half-periods for the Jacobian elliptic functions are signified by the elliptic moduli $k,\, k^\prime$). It turns out that the Jacobian form of the Darboux equation is preferred when we are dealing with local series expansion because more formulae for computation are available as mentioned earlier and their analogy with trigonometric functions,
 while the Weierstrass form is more advantageous when dealing with symmetry properties. As we shall see later that our theory of the series expansions, written as the sums in terms of Jacobi elliptic functions has an old origin, which can be traced back to the work of Ince \cite[(1940)]{Ince1} for solving a special case of the Darboux equation, namely the Lam\'e equation. As a result, we will present our main results for both the Jacobian form and Weierstrass form equations, but we  present the argument leading to these results for the Jacobian form (\ref{E:darboux}) only in this paper.

Indeed, amide the pioneering work of Treibich and Verdier, the equation has been under intense study from different perspectives by recent researchers
such as Gesztesy and Weikard \cite[(1996)]{GW2}, Matveev and Smirnov \cite[(2006)]{MS} , Takemura \cite[(2003, 2008, 2009)]{Take,Take2,Take3} ,
Veselov \cite[(2011)]{Veselov}  etc. On the other hand, the Lam\'e equation, which is a special case of the Darboux equation,
appears in many different contexts such as boundary value problems (Courant-Hilbert) \cite{CH}, finite-gap problem (integrable systems) \cite{Dub,Novikov,IM,GW1}, algebraic geometry \cite{BD,Katz,CLW}, Cologero-Moser-Sutherland models \cite{Take}, etc.

This paper is organised as follows. We split the discussion of the group action $G_\mathrm{I}$ of transposition of local solutions at each singularity in \S\ref{S:g1} and and $G_\mathrm{II}$ for the group action of permutations of half-periods of a torus \S\ref{tran1} and \S\ref{tran2}. More specifically, the group actions $G_\mathrm{II}$ on the Weierstrass form are given in \S\ref{tran1} and those on the Jacobian form are given in \S\ref{tran2}. The joint actions of $G_\mathrm{I}$ and $G_\mathrm{II}$ on both forms of the Darboux equation are discussed in \S\ref{tran3}, while the symmetric group action on the accessory parameter $h$ is discussed in \S\ref{S:accessory}. In \S\ref{S:expansion}, we give the convergence criteria of local series expansions about the regular singularities on the torus. Then we discuss the criteria that lead to the termination of these infinite expansions. These finite sum solutions which we call Darboux polynomials are analogous to the classical Lam\'e polynomials.  Section \ref{S:192} enumerates the 192 local series expansions. We apply our main results to well-known special cases of the Darboux equation in Section \ref{S:Special}.

\section{Group actions: transpositions of local solutions}\label{S:g1}
As we have mentioned in the Introduction that the symmetry group of the Darboux equation is given by a semi-direct product $G_\mathrm{I}\rtimes_\Gamma G_\mathrm{II}$.
In this section, we introduce $G_\mathrm{I}$ which consists of \textit{transpositions} of two local solutions at each of the four regular singularities. As one can easily see below, these actions are automorphisms of the Darboux equations which leave the equation unchanged. Specifically, these are given by the transformations on the parameters $\xi,\,\eta,\,\mu,\,\nu$, which represent the local monodromies,  in the Darboux equations in both the Jacobian form (\ref{E:darboux}) and in the Weierstrass form (\ref{E:darbouxW}).


Let $\gamma\in \{\xi,\,\eta,\,\mu,\,\nu\}$. Denote $\gamma^+=\gamma$ and $\gamma^-=-\gamma-1$. Since $$\gamma^+(\gamma^++1)=\gamma^-(\gamma^-+1)=\gamma(\gamma+1),$$
both equations (\ref{E:darboux}) and (\ref{E:darbouxW}) remain unchanged under the following transformations
$$\xi\mapsto\xi^\pm,\ \eta\mapsto\eta^\pm,\ \mu\mapsto\mu^\pm,\ \nu\mapsto\nu^\pm.$$
%
\medskip

Such transformations give the following
	\begin{definition}
		Let $G_\mathrm{I}=\{+,-\}^4$ be the group of $\mathrm{16}$ automorphisms of
the set of parameters in the Darboux equation defined as above.	This group can also be identified as the (additive) group of mappings from $\{0,\,1,\,2,\, 3\}$ to $\Z_2$.
\end{definition}

It is easy to see that $G_\mathrm{I}$ forms a group which is isomorphic to $(\Z_2)^4$.

\bigskip

\section{Group actions on half-periods: Weierstrass elliptic form}\label{tran1}

In this section, we consider $G_\mathrm{II}$ in the semi-direct product $G_\mathrm{I}\rtimes_\Gamma G_\mathrm{II}$ which consists of translations and permutations of the half-periods of the underlying torus of the Darboux equation in the Weierstrass form. The consideration of Darboux equation in the Jacobian form will be discussed in the next section. So we discuss the transformations of Weierstrass elliptic functions, and hence they give transformations from a Darboux operator to another Darboux operator.

In general, if $w:X\to\mathbb{C}$ is a function and $g\in \mathrm{Aut}(X)$, then $g$ acts on $w$ naturally by
composition. In the study of Darboux equations, the elliptic functions like $\wp(u,\tau)$ or $\sn^2(u,k)$
are involved. The first task is to clarify the domain of these elliptic functions so that the automorphisms
of their domain act on these elliptic functions by composition (or pulling back). The natural candidate
is the ``universal family" of elliptic curves. However, it is standard that this universal family does not
exist \cite[p. 35]{HM}. Therefore, we will settle with a ``six-to-one" family.

The first step in our general formulation is to parametrize ``lattices with an ordered basis".
Recall that if a given lattice $\Lambda$ in $\mathbb{C}$ is equipped with an ordered base
$(2\omega_1,\, 2\omega_3)$. Its $\lambda$-invariant is defined by
$$
\lambda(\omega_1,\,\omega_3)=\dfrac{\wp(\omega_3)-\wp(\omega_1+\omega_3)}
{\wp(\omega_1)-\wp(\omega_1+\omega_3)}.
$$
It can be shown that this $\lambda$-\textit{invariant} is invariant under the action of
$\Gamma(2)=\{M\in SL(2,\Z):M=I\mbox{ (mod 2) }\}$ \cite[p. 9]{Lehner1964} and \cite[p. 278]{ahlfors}. Therefore, ``lattices with an ordered base" are parametrized by the well-known  quotient $\mathbb{H}/\Gamma(2)=\mathrm{X}(2)$ \cite[p. 278]{ahlfors}.

Our next step is to parametrize all the lattices. We know that two ``lattices with an ordered basis"
$(2\omega_1,\, 2\omega_3)$ and $(2\omega'_1,\, 2\omega'_3)$ have isomorphic underlying lattices when there exists $M\in SL(2,\C)$ which takes $\omega_1$ to $\omega'_1$ and takes $\omega_3$ to $\omega'_3$.
This $\mathbb{Z}-$linear map from span$(2\omega_1,\,2\omega_3)$ to span$(2M\omega_1,\, 2M\omega_3)$ induces
an action on the $\lambda$-invariants.

It is well-known that the orbit of the $\lambda$-invariant under the full modular group $\Gamma=\textrm{SL}(2,\,\Z)$ is
$$
\Big\{\lambda,\,\dfrac{1}{1-\lambda},\,\dfrac{\lambda-1}{\lambda},\,
\dfrac{1}{\lambda},\,\dfrac{\lambda}{\lambda-1},\,1-\lambda\Big\}.
$$
These are in fact the M\"obius transformations which are permutations of $\{0,1,\infty\}$.  These M\"obius transformations form a group known
as the \textit{anharmonic group}  anh \cite[p. 278]{ahlfors}. Indeed, we have
	\[
		\mathrm{anh}= \big\{M\in SL(2,\, \mathbb{Z}): M:\ \mathbb{CP}^1\backslash\{0,\, 1,\, \infty\}\to  \mathbb{CP}^1\backslash\{0,\, 1,\, \infty\}\big\}.
	\]

Since  $\Gamma(2)$ is a normal subgroup of $\Gamma$ \cite[p. 9]{Lehner1964}, so $\textrm{anh}$ is 
isomorphic to the quotient $\Gamma/\Gamma(2)$. 
Thus lattices are parametrized by $\mathrm{X}(2)$ modulo the action of anh.

The corresponding transformations of lattices with ordered basis are named, following the classical convention \cite[p. 369]{EAM2}, as
$X=I$, $A$, $B$, $C$, $D$, $E$. They also correspond to the action of permutations $\rho_X$ of $e_i=\wp(\omega_i)$ on the
$\lambda$-invariant. These are summarized in the Table \ref{TT} below.
\medskip

\begin{table}
\caption{}\label{TT}

\begin{center}


\begin{tabular}{c||c|c|c}
$X$ & {Six cross-ratios} & {Matrix representative} & $\rho_X$ \\
\hline
$I$ &$\lambda$ &$M_I=\begin{bmatrix}1&0\\0&1\end{bmatrix}$ &$(e_1)(e_2)(e_3)$\\
\hline
$A$ &$\dfrac{\lambda}{\lambda-1}$ &$M_A=\begin{bmatrix}1&0\\1&1\end{bmatrix}$ &$(e_1)(e_2e_3)$\\
\hline
$B$ &$\dfrac{1}{\lambda}$ &$M_B=\begin{bmatrix}0&1\\-1&0\end{bmatrix}$ &$(e_1e_3)(e_2)$\\
\hline
$C$ &$1-\lambda$ &$M_C=\begin{bmatrix}1&1\\0&1\end{bmatrix}$ &$(e_1e_2)(e_3)$\\
\hline
$D$ &$\dfrac{\lambda-1}{\lambda}$ &$M_D=\begin{bmatrix}-1&1\\-1&0\end{bmatrix}$ &$(e_1e_2e_3)$\\
\hline
$E$ &$\dfrac{1}{1-\lambda}$ &$M_E=\begin{bmatrix}0&1\\-1&-1\end{bmatrix}$ &$(e_1e_3e_2)$
\end{tabular}
\end{center}
\end{table}
\medskip

Now we consider a family of elliptic curves whose fibre at $\tau$ (together with the point at infinity) is cut
out (i.e., zero-locus) by
$$
y^2=4x^3-g_2(\tau)x-g_3(\tau).
$$
Explicitly, we have a family $\mathbb{E}$ (together with the point at infinity, i.e., in $\mathbb{CP}^2$)
$$
\mathbb{E}=\{(x,y;\tau)\in\C^2\times \mathrm{X}(2):y^2=4x^3-g_2(\tau)x-g_3(\tau)\}
\stackrel{\lambda\circ \pi_3}{\longrightarrow }\mathbb{C}\backslash\{0,1\},
$$

\medskip

\noindent where the map $\pi_3$ denotes the projection to the third coordinate.
The Weierstrass elliptic function $\wp$ can now be regarded as a function defined on $\mathbb{E}$, which
is just the projection to the first coordinate.

In fact, each of such six transformations $I$, $A$, $B$, $C$, $D$, $E:\mathbb{E}\to\mathbb{E}$ can be
described explicitly by
	\begin{align}\label{E:anh-weierstrass}
		I(x,\, y;\, \tau) & =(x,\, y;\, \tau); \nonumber \\
		A(x,\,y;\, \tau) & =\big((\tau-1)^2 x,\,(\tau-1)^3 y;\, \tau/(\tau-1)\big); \nonumber \\
		B(x,\,y;\, \tau) & =\big(\tau^2 x,\, \tau^3 y;\, 1/\tau\big); \nonumber \\
		C(x,\,y;\, \tau) & =\big(x,\, y;\, 1-\tau\big);\\
		D(x,\, y;\, \tau) & =\big(\tau^2 x,\,\tau^3 y;\, (\tau-1)/\tau\big);\nonumber \\
		E(x,\, y;\, \tau) &= \big((1-\tau)^2 x,\,(1-\tau)^3 y;\, 1/(1-\tau)\big). \nonumber
	\end{align}

It is easy to verify that they are well-defined maps from $\mathbb{E}$ to $\mathbb{E}$, since $g_2$ and $g_3$ are modular forms of weights 4 and 6 respectively.
On the other hand, given a torus with ordered base $(2\omega_1,\, 2\omega_3)$, the subgroup of the order 2 points
of the corresponding torus would be
$$
T=\{0,\, \omega_1,\, \omega_3,\ \omega_2=\omega_1+\omega_3\}.
$$
For each $j\in\{1,\,2,\, 3\}$, let $I_j$ be an action on $T$ which is translation by $\omega_j$. Obviously,
$K=\{I_0,\, I_1,\, I_2,\, I_3\}$ is isomorphic to the Klein four-group.
\medskip

Finally we are able to describe the group  $G_\mathrm{II}$:

\begin{definition} The group $G_\mathrm{II}$  consists of commutative diagrams of the form
		\begin{equation}\label{E:communtative}
\begin{array}{ccc}
\mathbb{E}&\longrightarrow  &\mathbb{E}\\
\lambda\circ \pi_3\left\downarrow\rule{0cm}{0.4cm}\right.\phantom{\lambda\circ\pi_3}
& &\phantom{\lambda\circ\pi_3}\left\downarrow\rule{0cm}{0.4cm}\right.\lambda\circ\pi_3\\
\mathbb{C}\backslash\{0,1\}& \longrightarrow & \mathbb{C}\backslash\{0,1\}
\end{array}
    \end{equation}
so that both of the horizontal maps are biholomorphic. The group multiplication between two such commutative diagrams is defined by obvious concatenation of two such diagrams.
\end{definition}

\begin{theorem}\label{E:lower} Let $K,\, G_\mathrm{II}$ and $\mathrm{anh}$ be the groups defined above. Then we have the short exact sequence
	$$
0\longrightarrow\  K\  \longrightarrow\  G_\mathrm{II}\longrightarrow \ \mathrm{anh}\longrightarrow\  0.
$$
\end{theorem}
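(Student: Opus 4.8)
The plan is to use the evident forgetful map $\psi\colon G_{\mathrm{II}}\to\mathrm{anh}$ that sends a commutative diagram to its bottom horizontal arrow, and to show that $\psi$ is a surjective homomorphism with kernel $K$. The first thing to check is that $\psi$ lands in $\mathrm{anh}$ at all: the bottom arrow is a biholomorphism of $\mathbb{C}\backslash\{0,1\}\cong\mathbb{CP}^1\backslash\{0,1,\infty\}$, and, as recalled above, the automorphism group of the thrice-punctured sphere is exactly $\mathrm{anh}$, the M\"obius transformations permuting $\{0,1,\infty\}$. That $\psi$ is a homomorphism is immediate, since multiplication in $G_{\mathrm{II}}$ is concatenation of diagrams, which composes bottom arrows; injectivity of $K\hookrightarrow G_{\mathrm{II}}$ is clear. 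What remains is surjectivity of $\psi$ and the identification $\ker\psi=K$.

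For surjectivity I would use the six explicit maps $I,A,B,C,D,E\colon\mathbb{E}\to\mathbb{E}$ of \eqref{E:anh-weierstrass}. Each is a biholomorphism of $\mathbb{E}$ — the well-definedness being exactly the observation already made, that the scalings of $x$ and $y$ by powers of $\tau-1$, $\tau$, $1-\tau$ are forced by $g_2,g_3$ being modular forms of weights $4$ and $6$ — and hence fits into a commutative diagram in $G_{\mathrm{II}}$. Using the transformation law of the modular $\lambda$-function under $\tau\mapsto\tau/(\tau-1),\,1/\tau,\,1-\tau,\dots$, the induced arrow on the base is the corresponding entry of the ``six cross-ratios'' column of Table~\ref{TT}; thus $\psi$ carries $\{I,A,B,C,D,E\}$ bijectively onto the six elements of $\mathrm{anh}$, so $\psi$ is onto.

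The core of the argument is $\ker\psi=K$. If $\Phi\in\ker\psi$, commutativity of the diagram gives $\lambda\circ\pi_3\circ\Phi=\lambda\circ\pi_3$; since $\lambda\colon\mathrm{X}(2)\to\mathbb{C}\backslash\{0,1\}$ is biholomorphic this forces $\pi_3\circ\Phi=\pi_3$, i.e.\ $\Phi$ preserves each fibre $E_\tau$. On $E_\tau$, $\Phi$ is an automorphism of an elliptic curve, so $\Phi|_{E_\tau}=t_{s(\tau)}\circ\varepsilon_\tau$ where $s(\tau)=\Phi(O_\tau)$ and $\varepsilon_\tau$ fixes the origin; away from the finitely many fibres with extra automorphisms $\varepsilon_\tau=\pm1$, and holomorphy of $\Phi$ over the connected base $\mathrm{X}(2)$ forces $\varepsilon_\tau$ to be a global constant $\pm1$. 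Thus $\Phi$ is translation by a holomorphic section $s$ of $\mathbb{E}\to\mathrm{X}(2)$, up to the global sign $[-1]$. The four sections $O$, $(e_1,0)$, $(e_2,0)$, $(e_3,0)$ — that is, the translations by $0,\omega_1,\omega_2,\omega_3$ — are holomorphic precisely because $e_1,e_2,e_3$ are single-valued on $\mathrm{X}(2)$, and by the classical determination of the Mordell--Weil group of the Legendre family these are \emph{all} the sections. Hence the translations in $\ker\psi$ are exactly $K$, which is isomorphic to the Klein four-group. The residual involution $[-1]\colon(x,y;\tau)\mapsto(x,-y;\tau)$ is disposed of by noting that it acts trivially on the Darboux equation \eqref{E:darbouxW} (which is even in $u$), hence is identified with the identity in the symmetry group; with this understanding $\ker\psi=K$ and the short exact sequence follows.

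I expect the kernel computation to be the genuine obstacle: Steps 1 and 2 are formal once \eqref{E:anh-weierstrass} and Table~\ref{TT} are in hand, but identifying the fibre-preserving automorphisms of $\mathbb{E}$ needs both the rigidity that the origin-fixing part is a constant $\pm1$ and the input that $\mathbb{E}\to\mathrm{X}(2)$ has no sections beyond its $2$-torsion, together with the bookkeeping that removes the $[\pm1]$ ambiguity — equivalently, keeping track that $G_{\mathrm{II}}$ is understood through its action on the equation.
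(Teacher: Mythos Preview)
Your proof is considerably more thorough than the paper's. The paper's argument is minimal: it simply names the surjection $G_{\mathrm{II}}\to\mathrm{anh}$ as the canonical lower-arrow map of the commutative diagram, and then explicitly writes down the injection $K\hookrightarrow G_{\mathrm{II}}$ by sending each $I_j$ to fibrewise translation by the 2-torsion point $(\wp(x_j),0;\tau)$ under the group law of the cubic. Exactness at the middle term is never argued; implicitly the paper is treating $G_{\mathrm{II}}$ as the 24-element group generated by the listed maps (cf.\ Remark~\ref{R:permutation}, which identifies $G_{\mathrm{II}}$ with the permutation group of $\{0,\omega_1,\omega_2,\omega_3\}$), so that exactness reduces to a cardinality count once the two maps are in place.

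You instead take the diagram definition of $G_{\mathrm{II}}$ at face value and compute $\ker\psi$ from first principles, invoking the structure of $\mathrm{Aut}(E_\tau)$ and the Mordell--Weil group of the Legendre family. This is a genuinely different and more rigorous route, and it surfaces a real subtlety the paper elides: under the stated definition the involution $[-1]\colon(x,y;\tau)\mapsto(x,-y;\tau)$ lies in $\ker\psi$ but not in $K$, so some further identification (through the action on the equation, as you propose) is needed for the sequence to hold as written. One caution on your kernel computation: Mordell--Weil controls the \emph{algebraic} sections over $\mathbb{C}(\lambda)$, whereas $\mathrm{X}(2)$ is non-compact Stein, and an analytic elliptic fibration over such a base may admit transcendental holomorphic sections beyond its algebraic ones. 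To make this airtight you would need either to restrict $G_{\mathrm{II}}$ to algebraic automorphisms from the outset, or to argue separately that no transcendental translations arise --- or, more economically, to fall back on the paper's tacit 24-element generation and count.
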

\medskip

\begin{proof}
 Here the surjective map $G_\mathrm{II}\ \longrightarrow \ \mathrm{anh}$ is canonical in the sense that we apply the lower homomorphism of the above commutative diagram (\ref{E:communtative}). We have established the right half of the short exact sequence. To establish the left-half of the short exact sequence, we define the map $K\to G_\mathrm{II}$ as follows.  Recall that the cubic
curve $y^2=4x^3-g_2(\tau)x-g_3(\tau)$ is an Abelian group (with the usual group addition ``$+$" of two points of the elliptic curve is defined by the inverse point of a third point obtained from the intersection of the straight line passing through the two points and the elliptic curve)
with the point at infinity chosen as the identity. Let $x_1$, $x_2$, $x_3$ be the zeros of $\wp^\prime(\cdot \, ;\, \tau)$. Then the map $K\to G_\mathrm{II}$ given by
	\begin{align}\label{E:half-period-shifts}
		I_0 &\longmapsto [(x,\, y;\, \tau)\mapsto (x,\,y;\, \tau)],\nonumber \\
		I_1 &\longmapsto [(x,\,y;\, \tau)\mapsto (x,\, y;\, \tau)+(\wp(x_1),\, 0;\, \tau)],\nonumber \\
		I_2 &\longmapsto [(x,\, y;\, \tau)\mapsto (x,\, y;\,\tau)+(\wp(x_2),\, 0;\, \tau)],\\
		I_3 &\longmapsto [(x,\, y;\, \tau)\mapsto (x,\, y;\, \tau)+(\wp(x_3),\, 0;\, \tau)] \nonumber
	\end{align}
is clearly injective.
\end{proof}
\medskip

\begin{remark}\label{R:permutation}
We have seen that the anharmonic group anh corresponds to the permutation group of the half-periods $\{\omega_1,\omega_2,\omega_3\}$, while the Klein four group acts on a torus by translations by half-periods. Therefore they generate a group $G_{\mathrm{II}}$ which corresponds to the permutation group of the order-two points $\{0,\omega_1,\omega_2,\omega_3\}$.
\end{remark}
\medskip

\subsection{Maier's hyper-Kummer group $\mathfrak{K}$ and $G_\mathrm{II}$} The analysis of $G_\mathrm{II}$ can also be applied to the hyper-Kummer group mentioned in Maier {\cite{Maier1,Maier}}. Recall that the hyper-Kummer group $\mathfrak{K}$ (in the case of $\mathbb{CP}^1$ together with four marked points)
 is the subgroup of $S_4\times \mathrm{PSL}(2,\, \C)$ which acts on $(0,\,1,\,\infty,\,\lambda)$ leaving each of the first
three coordinates fixed. We define a map
\smallskip

\begin{equation}\label{E:maier}
\mathfrak{K}\longrightarrow\Big\{\lambda,\,\dfrac{1}{1-\lambda},\,\dfrac{\lambda-1}{\lambda},\,
\dfrac{1}{\lambda},\,\dfrac{\lambda}{\lambda-1},\,1-\lambda\Big\}
\end{equation}
\smallskip

\noindent so that $(\sigma,\, T)\in S_4\times \mathrm{PSL}(2,\, \C)$ is sent to the cross ratio
$(\sigma(0),\,\sigma(1),\,\sigma(\infty),\, \sigma(\lambda))$. Note that the latter group of order six is
identified to the anharmonic group $\mathrm{anh}$. We remark that one of the $\sigma\in S_4$  or $T\in\mathrm{PSL}(2,\, \C)$ is redundant in the above description since $\sigma$ uniquely determines $T$ (and vice versa). However, no such simplification is possible when the discussion is about a second order differential equation with $n$ singular points where $n\ge 5$ as Maier considered  {\cite{Maier1}}.

\medskip

\begin{theorem}\label{ExSeq} We have the following commutative diagram consisting of two rows of short exact sequences:
	\begin{equation}\label{E:upper-lower}
\begin{array}{rcccl}
0\to &K&\to\mathfrak{K}\to&\mathrm{anh}&\to 0\\
 &||&\downarrow&||& \\
0\to &K&\to G_\mathrm{II}\to&\mathrm{anh}&\to 0.
\end{array}
	\end{equation}
In particular, the hyper-Kummer group $\mathfrak{K}$ is isomorphic to $G_\mathrm{II}$.
\end{theorem}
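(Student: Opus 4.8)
The plan is to prove Theorem~\ref{ExSeq} by invoking the short five lemma (the ``five lemma'' in its $5\times 5$ form, restricted to the two short exact sequences). We already have the bottom row from Theorem~\ref{E:lower}, and the top row is essentially a reformulation of the well-known fact that the hyper-Kummer group $\mathfrak{K}$ sits in an extension of $\mathrm{anh}$ by $K$; this is standard and I would either cite Maier~\cite{Maier1,Maier} for it or reprove it in a sentence by noting that an element of $\mathfrak{K}$ is determined by a permutation $\sigma\in S_4$ of the four marked points, that those permutations fixing $0,1,\infty$ (hence sending $\lambda$ to itself) form the Klein four-group $K$ of ``translations'', and that the induced action on the cross-ratio realizes the quotient $\mathfrak{K}/K\cong\mathrm{anh}$. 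So the real content is the construction of the vertical map $\mathfrak{K}\to G_\mathrm{II}$ making both squares commute.

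The key step is to define that vertical homomorphism. The idea is dictionary-translation: a torus $\mathbb{C}/\Lambda$ with ordered base $(2\omega_1,2\omega_3)$ is, via $\wp$, identified with the cubic curve in $\mathbb{E}$, and under this identification the four order-two points $\{0,\omega_1,\omega_2,\omega_3\}$ go to the point at infinity together with the three points $(e_i,0;\tau)$; on the $\mathbb{CP}^1$ side these are exactly the four marked points $\{\infty,0,1,\lambda\}$ after the standard normalization. Thus an element $(\sigma,T)\in\mathfrak{K}$, which permutes the four marked points leaving $0,1,\infty$ pointwise fixed, corresponds precisely to a biholomorphism of the total space $\mathbb{E}$ permuting the four order-two sections and covering an automorphism of $\mathbb{C}\backslash\{0,1\}$ — that is, to an element of $G_\mathrm{II}$ in the sense of the commutative-diagram definition. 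Concretely I would send the ``translation part'' $K\le\mathfrak{K}$ to the maps \eqref{E:half-period-shifts} (translation by $(\wp(x_i),0;\tau)$ on each fibre) and send a coset representative realizing $\rho_X\in\mathrm{anh}$ to the corresponding map $X\in\{I,A,B,C,D,E\}$ of \eqref{E:anh-weierstrass}; one checks these choices are compatible with the group laws, so they assemble into a homomorphism. Commutativity of the right-hand square is then immediate because both composites $\mathfrak{K}\to\mathrm{anh}$ and $\mathfrak{K}\to G_\mathrm{II}\to\mathrm{anh}$ are ``read off the cross-ratio''; commutativity of the left-hand square is the statement that the restriction of the vertical map to $K$ is exactly \eqref{E:half-period-shifts}, which holds by construction. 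Finally, since the left vertical map ($=\mathrm{id}_K$) and the right vertical map ($=\mathrm{id}_{\mathrm{anh}}$) are isomorphisms, the five lemma forces $\mathfrak{K}\to G_\mathrm{II}$ to be an isomorphism.

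The main obstacle I anticipate is not the diagram chase — that is formal once the vertical arrow exists — but rather making the vertical arrow \emph{honestly well-defined and a homomorphism} in the presence of the ambiguities on both sides: on the $\mathbb{CP}^1$ side an element of $\mathfrak{K}$ carries the redundant pair $(\sigma,T)$ (the paper itself flags that $\sigma$ determines $T$, so one should work with $\sigma$ alone here), and on the torus side one must be careful that $G_\mathrm{II}$ was defined as a group of commutative squares, so the target of each $\sigma$ is a genuine such square and concatenation of squares matches composition of permutations. I would handle this by checking it on generators: take the Klein-four translations and, say, two generators of $\mathrm{anh}$ (e.g.\ the transpositions $(e_1e_2)$ and $(e_1e_3)$, realized by $C$ and $B$), verify directly from \eqref{E:anh-weierstrass} and \eqref{E:half-period-shifts} that the relations of $\mathfrak{K}$ (which is abstractly $B_3$-type of order~$24$, or more precisely $S_4$) are respected, and that the images generate $G_\mathrm{II}$. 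A secondary subtlety is the normalization identifying the unordered triple $\{e_1,e_2,e_3\}$ (equivalently the three finite marked points on the cubic) with $\{0,1,\lambda\}$ on $\mathbb{CP}^1$ and tracking how $\rho_X$ on the $e_i$ matches the cross-ratio action; this is exactly the content already tabulated in Table~\ref{TT}, so I would simply quote that table to close the gap.
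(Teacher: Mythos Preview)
Your proposal is correct and follows essentially the same route as the paper: establish the upper short exact sequence (the paper does this by identifying the kernel of $\mathfrak{K}\to\mathrm{anh}$ with the Klein four-group of double transpositions in the cross-ratio), construct the vertical map $\mathfrak{K}\to G_\mathrm{II}$, and read off the isomorphism from the commutative diagram (the paper leaves the five lemma implicit in its ``In particular\ldots'').  The only substantive difference is in how the vertical arrow is built: the paper writes down a single closed formula $(x,y;\tau)\mapsto\bigl((c\tau+d)^2x,\,(c\tau+d)^3y;\,\tfrac{a\tau+b}{c\tau+d}\bigr)$ coming directly from the $\mathrm{PSL}(2)$-component of $(\sigma,T)$ and declares the remaining maps ``natural'', whereas you assemble the map piecewise from its restriction to $K$ (via \eqref{E:half-period-shifts}) and a choice of $\mathrm{anh}$-representatives (via \eqref{E:anh-weierstrass}), then verify the homomorphism property on generators using Table~\ref{TT} --- your version is more explicit about the $K$-part and the compatibility check, the paper's is more compact.
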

\medskip

\begin{proof} We have already established the lower short exact sequence in the Theorem \ref{E:lower}. To establish the upper short exact sequence
$$
0 \longrightarrow  K \longrightarrow \mathfrak{K}\longrightarrow  \mathrm{anh}\longrightarrow  0,
$$
we note that the kernel of the map $\mathfrak{K}\to \mathrm{anh}$ as defined in (\ref{E:maier}) above consists of wo disjoint pairs of transpositions of the points in the cross-ratios $(\sigma(0),\,\sigma(1),\,\sigma(\infty),\, \sigma(\lambda))$ which can easily be seen to be isomorphic to the Klein four-group $K$.

Now we would like to construct a map $\mathfrak{K}\to G_\mathrm{II}$. First of all, we note that the family
$\mathbb{E}\to\mathbb{C}\backslash\{0,1\}$ is written explicitly as
\smallskip

$$
\mathbb{E}=\big\{(x,\, y;\,\tau)\in\mathbb{C}^2\times \mathrm{X}(2):y^2=4x^3-g_2(\tau)\, x-g_3(\tau)\big\}.
$$
\smallskip

\noindent Then $(\sigma,\left(\begin{array}{cc}a&b\\c&d\end{array}\right))\in S_4\times PSL(2,\mathbb{C})$ induces the
commutative diagram
$$
\begin{array}{ccc}
\mathbb{E}&\longrightarrow&\mathbb{E}\\
\left\downarrow\rule{0cm}{0.4cm}\right.&&\left\downarrow\rule{0cm}{0.4cm}\right.\\
\mathbb{C}\backslash\{0,1\}&\longrightarrow&\mathbb{C}\backslash\{0,1\}
\end{array}
$$
where the map in the top row is given by
$$
(x,\, y;\, \tau)\mapsto \Big((c\tau +d)^2x,\, (c\tau+d)^3y;\  \frac{a\tau+b}{c\tau+d}\Big).
$$
The other maps in the diagram (\ref{E:upper-lower}) are natural.
\end{proof}
\smallskip

\noindent The commutative diagram (\ref{E:upper-lower}) established above describes that the hyper-Kummer group $\mathfrak{K}$ of Heun equation has the same kind of semi-direct product decomposition as $G_\mathrm{II}$.

\subsection{The action of $G_\mathrm{II}$ on Darboux operator in Weierstrass form}\label{G2W}

Given an elliptic function $\wp:\mathbb{E}\to\mathbb{CP}^1$, the group $G_\mathrm{II}$ acts on $\wp$ by
pulling back. This action by anh can be found in (\ref{E:anh-weierstrass}).

For each $\tau$ in $\mathbb{H}$, let $e_1(\tau)$, $e_2(\tau)$, $e_3(\tau)$ be the zeros of
\[
4x^3-g_2(\tau)x-g_3(\tau).
\]
\begin{definition}
The Darboux operator is the differential operator on $\mathbb{E}$
\[
\begin{array}{rl}
D_{(\tau; \xi, \eta, \mu, \nu)}=&\Big(y\dfrac{d}{dx}\Big)^2-\Big[\xi(\xi+1)x\\
 &+\eta(\eta+1)\dfrac{(x-e_2(\tau))(x-e_3(\tau))-(x-e_1(\tau))^2}{x-e_1(\tau)}\\
 &+\mu(\mu+1)\dfrac{(x-e_3(\tau))(x-e_1(\tau))-(x-e_2(\tau))^2}{x-e_2(\tau)}\\
 &+\nu(\nu+1)\dfrac{(x-e_1(\tau))(x-e_2(\tau))-(x-e_3(\tau))^2}{x-e_3(\tau)}\Big].
\end{array}
\]
\end{definition}

Note that $e_1(\tau)$, $e_2(\tau)$, $e_3(\tau)$ are formally modular forms of weight 2, in particular, for $j=1,2,3$ and
$X=I,A,B,C,D,E$,
	\begin{equation}\label{E:ej}
		e_j\Big(\frac{a\tau+b}{c\tau+d}\Big)=(c\tau+d)^{2}\rho_X(e_j)(\tau),
	\end{equation}
where $M_X=\left[\begin{array}{cc}a&b\\c&d\end{array}\right]$ and $\rho_X$ are given in Table \ref{TT}.
 The actions of
anh (and hence $G_\mathrm{II}$) on the Darboux operator are easily obtained as the following theorem by direct computation and (\ref{E:ej}).
\begin{theorem}
For each $M_X=\left[\begin{array}{cc}a&b\\c&d\end{array}\right]\in$ anh, the pull-back of the Darboux operator
$D_{(\tau; \xi, \rho_X(\eta), \rho_X(\mu), \rho_X(\nu))}$ by $M_X$ is
\[
(c\tau+d)^{-2}D_{(\frac{a\tau+b}{c\tau+d};\, \xi, \eta, \mu, \nu)}.
\]
\end{theorem}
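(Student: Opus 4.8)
The plan is to prove the identity by a direct computation on the explicit model $\mathbb{E}$, feeding in the two formulas already established: the description (\ref{E:anh-weierstrass}) of $M_X\colon\mathbb{E}\to\mathbb{E}$, which in coordinates reads $(x,y;\tau)\mapsto\big((c\tau+d)^2x,\,(c\tau+d)^3y;\,\tfrac{a\tau+b}{c\tau+d}\big)$, and the weight-$2$ transformation law (\ref{E:ej}) for the half-period values $e_1,e_2,e_3$. Throughout write $J=J(\tau)=c\tau+d$, a nonzero constant on each fibre, and read the subscript $D_{(\tau;\,\xi,\rho_X(\eta),\rho_X(\mu),\rho_X(\nu))}$ as the Darboux operator on the fibre over $\tau$ in which the parameters $\eta,\mu,\nu$ are attached to $e_{\rho_X(1)},e_{\rho_X(2)},e_{\rho_X(3)}$ rather than to $e_1,e_2,e_3$. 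I will split the operator into its kinetic part $\big(y\,\tfrac{d}{dx}\big)^2$ and its potential, show that each of them picks up the single scalar $J^2$ under $M_X$, and then match the shuffled $e_j$'s to the shuffled parameters.

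For the kinetic part, since $\tilde x=J^2x$ and $\tilde y=J^3y$ with $J$ constant along the fibre, one has $\tilde y\,\tfrac{d}{d\tilde x}=J^3y\cdot J^{-2}\tfrac{d}{dx}=J\big(y\,\tfrac{d}{dx}\big)$, hence $\big(\tilde y\,\tfrac{d}{d\tilde x}\big)^2=J^2\big(y\,\tfrac{d}{dx}\big)^2$; conceptually this is nothing but the homogeneity $\wp(\lambda u;\lambda\Lambda)=\lambda^{-2}\wp(u;\Lambda)$ combined with the fact that the lattice of $\tfrac{a\tau+b}{c\tau+d}$ and that of $\tau$ differ by the scale $1/J$, so the invariant derivation $y\,d/dx$ (which is $d/du$ in the Weierstrass uniformization) collects the single factor $J$. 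For the potential I check summand by summand that the same factor $J^2$ appears. The term $\xi(\xi+1)x$ becomes $\xi(\xi+1)J^2x$, and a rational term such as $\eta(\eta+1)\dfrac{(x-e_2)(x-e_3)-(x-e_1)^2}{x-e_1}$ becomes, after substituting $x\mapsto J^2x$ and $e_j\mapsto J^2\rho_X(e_j)$ from (\ref{E:ej}), equal to $\eta(\eta+1)\,J^2\,\dfrac{(x-\rho_X(e_2))(x-\rho_X(e_3))-(x-\rho_X(e_1))^2}{x-\rho_X(e_1)}$, the numerator scaling by $J^4$ and the denominator by $J^2$. Hence the whole Darboux operator is multiplied by $J^2$ and rewritten with $e_1,e_2,e_3$ replaced by $\rho_X(e_1),\rho_X(e_2),\rho_X(e_3)$. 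Since the three rational summands are distinguished precisely by which $e_j$ sits in the denominator, permuting the $e_j$ by $\rho_X$ amounts to permuting the attached parameters $\eta,\mu,\nu$ by $\rho_X$, and re-reading the equality at the level of fibres yields $M_X^*D_{(\tau;\,\xi,\rho_X(\eta),\rho_X(\mu),\rho_X(\nu))}=(c\tau+d)^{-2}D_{(\frac{a\tau+b}{c\tau+d};\,\xi,\eta,\mu,\nu)}$, as asserted.

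The calculation is routine, and I expect the main obstacle to be the two pieces of bookkeeping it rests on. First, one must be careful about which fibre is the source and which the target of $M_X$, and keep track of the cocycle $J(M_XM_Y,\tau)=J(M_X,M_Y\tau)\,J(M_Y,\tau)$, so that the exponent of $c\tau+d$ comes out as $-2$ and not $+2$; it helps here that $\mathrm{anh}$ is a group and $X\mapsto\rho_X$ a homomorphism, so replacing $M_X$ by $M_X^{-1}$ just replaces every $\rho_X$ by $\rho_X^{-1}$. Second, one must confirm that the asymmetric way the $e_j$ enter each rational summand — the denominator singling out one of them, the numerator treating the other two symmetrically — makes the $\rho_X$-action on $\{e_1,e_2,e_3\}$ induce exactly the $\rho_X$-action on $\{\eta,\mu,\nu\}$ recorded in Table \ref{TT}, and not some other permutation; this is a short check, done once and for all from the observation that the $j$-th rational coefficient depends on the triple $(e_1,e_2,e_3)$ only through the distinguished value $e_j$. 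With these two matters in place, nothing further is required.
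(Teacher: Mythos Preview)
Your proposal is correct and is exactly the approach the paper takes: the paper's entire proof is the sentence ``easily obtained \ldots\ by direct computation and (\ref{E:ej})'', and you have spelled out precisely that computation---scaling the kinetic term $\big(y\,\tfrac{d}{dx}\big)^2$ by $J^2$ via the coordinate change (\ref{E:anh-weierstrass}), scaling the potential by $J^2$ via the weight-$2$ law (\ref{E:ej}), and matching the permuted $e_j$'s to the permuted parameters. Your observation that each rational summand depends on $(e_1,e_2,e_3)$ only through the distinguished $e_j$ (since $e_1+e_2+e_3=0$ gives, e.g., $(x-e_2)(x-e_3)-(x-e_1)^2=3e_1x-g_2/4$) is a clean way to justify the parameter-permutation step, and your caveats about the $\rho_X$ versus $\rho_X^{-1}$ bookkeeping are well placed.
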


Now we look at the action of anh (and hence $G_\mathrm{II}$) on Darboux equation (\ref{E:darbouxW}) instead of the Darboux operator.
For example, we consider the map $A$ from~(\ref{E:anh-weierstrass}) acting on the Darboux equation (\ref{E:darbouxW}) with
$\omega_1=\tau/2$, $\omega_2=1/2+\tau/2$ and $\omega_3=1/2$. Notice that
\begin{eqnarray*}
A(\wp(u;\ \tau),\ \wp'(u;\ \tau);\tau)&=&\big((\tau-1)^2\wp(u;\ \tau),\ (\tau-1)^3\wp'(u;\ \tau);\frac{\tau}{\tau-1}\big)\\
                                      &=&(\wp(\tilde{u};\ \tilde\tau),\ \wp'(\tilde{u};\ \tilde{\tau});\tilde\tau),
\end{eqnarray*}
where $\tilde{u}=\frac{u}{\tau-1}$ and $\tilde{\tau}=\frac{\tau}{\tau-1}$. So the Darboux equation (\ref{E:darbouxW}) becomes
\beqn\label{E:newdarbouxW}
&&(\tau-1)^{-2}\frac{{d}^{2}y}{{d\tilde u}^{2}}+\Big[h-(\tau-1)^{-2}\big({\xi(\xi+1)}{\ts\wp(\tilde u;\tilde\tau)}
-{\ts\eta(\eta+1)\wp(\tilde u+\frac{\tilde\tau}{2};\tilde\tau)}\nonumber\\&&\ \hspace{2cm} -
{\ts\nu(\nu+1)\wp(\tilde u+\frac{\tilde\tau}{2}+\frac{1}{2};\tilde\tau)}-\mu(\mu+1){\wp(\tilde u+\frac{1}{2};\tilde\tau)\big)}\Big]y=0,\nonumber\\
\eeqn
which is equivalent to
\beq
&&\frac{{d}^{2}y}{{d\tilde u}^{2}}+\Big[\tilde h-{\xi(\xi+1)}{\ts\wp(\tilde u;\tilde\tau)}
-{\ts\eta(\eta+1)\wp(\tilde u+\frac{\tilde\tau}{2};\tilde\tau)}\\&&\ \hspace{2cm} -
{\ts\nu(\nu+1)\wp(\tilde u+\frac{\tilde\tau}{2}+\frac{1}{2};\tilde\tau)}-\mu(\mu+1){\wp(\tilde u+\frac{1}{2};\tilde\tau)}\Big]y=0,
\eeq
where $\tilde{h}=(\tau-1)^2h.$
Similarly, one can write down the group action of $G_\mathrm{II}$ on the Darboux equation in Weierstrass form by other elements $B,\, C,\, D,\, E$, while the action of the Klein four-group on the Darboux equation consists simply of shifting the half-periods in the independent variable $u$.

\section{Group actions on half-periods: Jacobi elliptic form}\label{tran2}

Since the Jacobi elliptic functions are much more convenient to compute, so historically, most of these classical differential equations with doubly periodic coefficients, such as the Darboux equation, and those listed in the Appendix \ref{S:Sparre} are written in Jacobian forms \cite{BF}. However, as we have explained earlier in the last section that
the action of the group $G_\mathrm{II}$ on Jacobi elliptic functions is much more complicated compared with that of the Weierstrass elliptic functions.

 This is due to the parametrization of the elliptic curves using Jacobi elliptic functions. For example, the $\sn (u,\, k)$ depends on the independent variable $u$ and modulus $k$, and the half-periods are implicit in the notation. One can convert the half-periods $\omega_1,\, \omega_2,\, \omega_3$ in terms of the modulus $k$ and the complementary modulus $k^\prime=\sqrt{1-k^2}$. We note that $k^2$ is the $\lambda$-invariant of the torus. We refer the reader to the Appendix \ref{S:conversion} for the relationships. The lower short exact sequence of (\ref{E:upper-lower})  in Theorem \ref{ExSeq} gives rise to symmetry group $G_\mathrm{II}$ whose actions on the Jacobi elliptic functions
$\sn,\ \cn,\ \dn$ can also be split into two parts, that are, the actions by the Klein four-group $K$ and the actions of anharmonic group $\mathrm{anh}$.

\subsection{The action of anh on Jacobi elliptic functions}\label{G2D}
In this section, we discuss the transformations $(u,\,k)\mapsto(\tau(u,\, k),\,\kappa(k))$ of Jacobi elliptic functions into other Jacobi elliptic functions by the anharmonic group anh. The six permutations  $I$,  $A$, $B$, $C$, $D$, $E$ written in the Table \ref{T0} on the Jacobi elliptic functions are different from those given in (\ref{E:anh-weierstrass}) and are more complicated. Indeed, they had been already given in  \cite[p. 369]{EAM2} which we reproduce in Table \ref{T0}.

{\scriptsize
\begin{table}
\caption{Transformations of Jacobi elliptic functions}\label{T0}

\begin{center}
%
\begin{tabular}{c||c|c|c||c|c||c|c|c}
  ${X}$ & ${\tau_{X}}$ & $\kappa_X$ & $\kappa_X'$ & $K(\kappa_X)$ & $K'(\kappa_X)$ & $\sn(\tau_{X},\kappa_X)$ &
  $\cn(\tau_{X},\kappa_X)$ & $\dn(\tau_{X},\kappa_X)$\\
  \hline
  $I$ & $u$ & $k$ & $k'$ & $K(k)$ & $K'(k)$& $\sn(u,k)$ & $\cn(u,k)$ & $\dn(u,k)$\\
  \hline
  $A$ & $k'u$ & $ik{k'}^{-1}$ & ${k'}^{-1}$ & $k'K(k)$ & $k'[K'(k)-iK(k)]$ &$k'\sd(u,k)$ & $\cd(u,k)$ & $\nd(u,k)$\\
  \hline
  $B$ & $-iu$ & $k'$ & $k$ & $K'(k)$ & $K(k)$ &$i\sc(u,k)$ & $\nc(u,k)$ & $\dc(u,k)$\\
  \hline
  $C$ & $ku$ & $k^{-1}$ & $-ik'{k}^{-1}$ & $k[K(k)+iK'(k)]$ &$kK'(k)$ &$k\sn(u,k)$ & $\dn(u,k)$ & $\cn(u,k)$\\
  \hline
  $D$ & $-ik'u$ & ${k'}^{-1}$ &$-ik{k'}^{-1}$ &$k'[K'(k)+iK'(k)]$ &$k'K(k)$ &$-ik'\sc(u,k)$ & $\dc(u,k)$ & $\nc(u,k)$\\
  \hline
  $E$ & $-iku$ &$ik'{k}^{-1}$ &$k^{-1}$ & $kK'(k)$ & $k[K(k)+iK'(k)]$ &$-ik\sd(u,k)$ & $\nd(u,k)$ & $\cd(u,k)$\\

\end{tabular}
\end{center}
\end{table}
}


{\small
%
\begin{table}
\caption{}\label{T1}

\begin{tabular}{c||c|c|c||c|c|c}
  ${X_i}$ & ${\tau_{X_i}}$ & $\kappa_X$ &$ \kappa'_X$ & $\sn(\tau_{X_i},\kappa_X)$ & $\cn(\tau_{X_i},\kappa_X)$ & $\dn(\tau_{X_i},\kappa_X)$\\
  \hline
  $I_0$ & $u$ & & &$\sn(u,k)$ & $\cn(u,k)$ & $\dn(u,k)$\\
  $I_1$ & $u+K(k)$ & & &$\cd(u,k)$ & $-k'\sd(u,k)$ & $k'\nd(u,k)$\\
  $I_2$ & $u+K(k)+iK'(k)$ & $k$& $k'$ &$k^{-1}\dc(u,k)$ & $ik'k^{-1}\nc(u,k)$ & $ik'\sc(u,k)$\\
  $I_3$ & $u+iK'(k)$ & & &$k^{-1}\ns(u,k)$ & $-ik^{-1}\ds(u,k)$ & $-i \cs(u,k)$\\ \hline
  $A_0$ & $k'u$ & & &$k'\sd(u,k)$ & $\cd(u,k)$ & $\nd(u,k)$\\
  $A_1$ & $k'(u+K(k))$ & & &$\cn(u,k)$ & $-\sn(u,k)$ & ${k'}^{-1}\dn(u,k)$\\
  $A_2$ & $k'(u+K(k)+iK'(k))$ & $ikk'^{-1}$&${k'}^{-1}$ &$-ik^{-1}\ds(u,k)$ & $k^{-1}\ns(u,k)$ & $-i{k'}^{-1}\cs(u,k)$\\
  $A_3$ & $k'(u+iK'(k))$ & & &$ik'k^{-1}\nc(u,k)$ & $k^{-1}\dc(u,k)$ & $i\sc(u,k)$\\ \hline
  $B_0$ & $-iu$ & & &$i\sc(u,k)$ & $\nc(u,k)$ & $\dc(u,k)$\\
  $B_1$ & $-i(u+K(k))$ & & &$i{k'}^{-1}\cs(u,k)$ & ${k'}^{-1}\ds(u,k)$ & $-\ns(u,k)$\\
  $B_2$ & $-i(u+K(k)+iK'(k))$ & $k'$ & $k$ &${k'}^{-1}\dn(u,k)$ & $ik{k'}^{-1}\cn(u,k)$ & $k\sn(u,k)$\\
  $B_3$ & $-i(u+iK'(k))$ & & &$\nd(u,k)$ & $ik\sd(u,k)$ & $k\cd(u,k)$\\ \hline
  $C_0$ & $ku$ & & &$k\sn(u,k)$ & $\dn(u,k)$ & $\cn(u,k)$\\
  $C_1$ & $k(u+K(k))$ & & &$k\cd(u,k)$ & ${k'}\nd(u,k)$ & $-k'\sd(u,k)$\\
  $C_2$ & $k(u+K(k)+iK'(k))$ & $k^{-1}$&$-ik'k^{-1}$ &$\dc(u,k)$ & $ik'\sc(u,k)$ & $ik'{k}^{-1}\nc(u,k)$\\
  $C_3$ & $k(u+iK'(k))$ & & &$\ns(u,k)$ & $-i\cs(u,k)$ & $-ik^{-1}\dc(u,k)$\\ \hline
  $D_0$ & $-ik'u$ & & &$-ik'\sc(u,k)$ & $\dc(u,k)$ & $\nc(u,k)$\\
  $D_1$ & $-ik'(u+K(k)+iK'(k))$ & & &$-i\cs(u,k)$ & $-\ns(u,k)$ & $-{k'}^{-1}\ds(u,k)$\\
  $D_2$ & $-ik'(u+K(k)+iK'(k))$ & $k'^{-1}$& $-ikk'^{-1}$ &$\dn(u,k)$ & $k\sn(u,k)$ & $ik{k'}^{-1}\cn(u,k)$\\
  $D_3$ & $-ik'(u+iK'(k))$ & & &$k'\nd(u,k)$ & $k\cd(u,k)$ & $ik\sd(u,k)$\\ \hline
  $E_0$ & $-iku$ & & &$-ik\sd(u,k)$ & $\nd(u,k)$ & $\cd(u,k)$\\
  $E_1$ & $-ik(u+K(k))$ & & &$-ik{k'}^{-1}\cs(u,k)$ & ${k'}^{-1}\dn(u,k)$ & $-\sn(u,k)$\\
  $E_2$ & $-ik(u+K(k)+iK'(k))$ & $-ik'k^{-1}$& $k^{-1}$ &$-i{k'}^{-1}\ds(u,k)$ & $-i{k'}^{-1}\cs(u,k)$ & ${k}^{-1}\ns(u,k)$\\
  $E_3$ & $-ik(u+iK'(k))$ & & &$\nc(u,k)$ & $i\sc(u,k)$ & ${k}^{-1}\dc(u,k)$\\
  \end{tabular}
\end{table}
\medskip
}

The second column $\tau_X$ denotes the  change of the independent variable $u$
under the actions of $X$. The second and third columns denote change in the modulus $\kappa_X$ and the complementary modulus $\kappa^\prime_X$, respectively,  under the actions of $X$. The fourth and the fifth columns denote the change of the quarter-periods $K$ and $K^\prime$ under the actions of $X$, while the last three columns describe the transformations on the Jacobi elliptic functions $\sn,\, \cn,\,\dn$. We note that, as far as the Darboux equation in the Jacobian form is concerned, the periodic coefficients, written in terms of $\sn^2,\, \cn^2,\, \dn^2$,  have half-periods $K$ and $iK^\prime$ as denoted in the fourth and fifth  columns in the Table \ref{T0}.
\medskip

\subsection{The action of the Klein four-group $K$ on Jacobi elliptic functions}

Similarly, the action of the Klein four-group $K$ on the Jacobi elliptic functions $\sn,\, \cn,\, \dn$ and their moduli are given by the first section (i.e., $I_0,\, I_1,\, I_2,\, I_3$) of the Table \ref{T1}. These actions correspond to the \textit{shifting} of the half-periods of $\sn^2,\, \cn^2,\, \dn^2$ of the Jacobian form of Darboux equation (\ref{E:darboux}). We will apply these transformations when we discuss the joint group actions of the Darboux equations in the next section (\S\ref{joint-action}). We remark that as one can see that the transformations are much more complicated compared with those for the Weierstrass form of the Darboux equation.

\subsection{The joint actions of the Klein four-group $K$ and anh}\label{joint-action}
Finally, we come to describing the joint group actions of the Klein four-group $K$ and the anharmonic group anh. The combined actions act on the Jacobi elliptic functions are given by the Table~\ref{T1} which consists of six types of permutations of quarter-periods (one period of $\sn$ doubles that of $\sn^2$)
 each of which leaves the origin of the torus unchanged, and four types of translations of the quarter-periods. This is reflected in the Table \ref{T1} having six horizontal sections labelled by type $I,\, A, \, B,\, C,\, D,\, E$ which corresponds to actions of anh (permutations of quarter-periods), and each of which is further subdivided into four translations. These four translations represent the Klein four-group actions. For example, the map $B_3$ represents the transformation by the translation $I_3$ acting after the action of $B$.

\section{Joint actions of $G_\mathrm{I}$ and $G_\mathrm{II}$ automorphisms}\label{tran3}

\begin{definition} Let $G$ be the symmetry group of the Darboux equation generated by the groups $G_\mathrm{I}$ (i.e., the transpositions of local solutions) and $G_\mathrm{II}$ (i.e., actions on half-periods).
\end{definition}

\begin{theorem} The group $G$ is a semi-direct product \footnote{See the Definition \ref{semidirect} in Appendix \ref{SD} for the semi-direct product.}
	\[
		G\cong G_\mathrm{I}\rtimes_\Gamma G_\mathrm{II},
	\]
for some action $\Gamma: G_\mathrm{II}\to \mathrm{Aut}(G_I)$.
\end{theorem}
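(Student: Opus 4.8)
The plan is to realize $G$ concretely as a group of transformations of the set of 192 ``labelled local solutions'' of the Darboux equation, and then to decompose this action. First I would make precise the ambient group in which both $G_\mathrm{I}$ and $G_\mathrm{II}$ act. The group $G_\mathrm{I}\cong(\Z_2)^4$ acts on the parameter tuple $(\xi,\eta,\mu,\nu)$ by the sign changes $\gamma\mapsto\gamma^\pm$; the group $G_\mathrm{II}$, by Remark~\ref{R:permutation}, acts as the full permutation group on the four half-periods (equivalently the order-two points) $\{0,\omega_1,\omega_2,\omega_3\}$ of the torus, and this action carries along a relabelling of which parameter is attached to which half-period. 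Thus I would first check that $G_\mathrm{I}$ is normalized by $G_\mathrm{II}$ inside the automorphism group of the Darboux equation: conjugating a sign change $[\gamma]_-$ at a given half-period by an element $g\in G_\mathrm{II}$ that sends that half-period to another one produces the sign change at the image half-period. This gives a well-defined action $\Gamma:G_\mathrm{II}\to\mathrm{Aut}(G_\mathrm{I})$, namely $G_\mathrm{II}$ acts on $G_\mathrm{I}=\{+,-\}^4$ — identified with maps $\{0,1,2,3\}\to\Z_2$ as in the Definition after \eqref{E:darboux} — by permuting the four coordinates through the quotient map $G_\mathrm{II}\to\mathrm{anh}\hookrightarrow S_4$ of Remark~\ref{R:permutation} composed with the $K$-part; concretely $\Gamma(g)$ sends $(\epsilon_0,\epsilon_1,\epsilon_2,\epsilon_3)\mapsto(\epsilon_{\pi_g^{-1}(0)},\dots,\epsilon_{\pi_g^{-1}(3)})$ where $\pi_g$ is the induced permutation of the four order-two points.

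Next I would verify the three hypotheses of the internal semi-direct product criterion (the Definition~\ref{semidirect} in Appendix~\ref{SD}). First, $G_\mathrm{I}$ is normal in $G$: since $G$ is generated by $G_\mathrm{I}$ and $G_\mathrm{II}$, it suffices that $G_\mathrm{II}$ normalizes $G_\mathrm{I}$, which is the conjugation computation just described; $G_\mathrm{I}$ normalizes itself since it is abelian. Second, $G_\mathrm{I}\cap G_\mathrm{II}=\{1\}$: an element of $G_\mathrm{I}$ fixes every half-period and changes only the exponents, while a nontrivial element of $G_\mathrm{II}$ either permutes the half-periods nontrivially or, for the nontrivial elements of the translation part $K$, moves the base point $0$ of the torus — in either case it is not in $G_\mathrm{I}$. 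To make this airtight one records the effect of each generator on the Riemann $P$-scheme $P_{\mathbb{C}/\Lambda}$: $G_\mathrm{I}$ acts trivially on the top row (locations of singularities) and swaps the two entries of designated columns, whereas $G_\mathrm{II}$ permutes the columns (together with an accompanying reindexing). Third, $G=G_\mathrm{I}G_\mathrm{II}$ by the definition of $G$ as the group generated by the two. These three facts give $G\cong G_\mathrm{I}\rtimes_\Gamma G_\mathrm{II}$ with the action $\Gamma$ above.

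I would then, for bookkeeping and to nail down that the product is genuinely semi-direct and not direct, make the action $\Gamma$ explicit by letting $G_\mathrm{II}$ act through the short exact sequence $0\to K\to G_\mathrm{II}\to\mathrm{anh}\to 0$ of Theorem~\ref{E:lower}: the $\mathrm{anh}$-part permutes the three half-periods $\{\omega_1,\omega_2,\omega_3\}$ according to $\rho_X$ of Table~\ref{TT}, hence permutes the three coordinates of $G_\mathrm{I}$ indexed by $1,2,3$ and fixes the coordinate indexed by $0$; the $K$-part, which translates the torus by a half-period and thus permutes all four order-two points in pairs (Remark~\ref{R:permutation}), correspondingly permutes all four coordinates of $G_\mathrm{I}$ in two disjoint transpositions. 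One checks these two prescriptions are compatible over the extension, so $\Gamma$ factors through the induced action $G_\mathrm{II}\to S_4$ on $\{0,1,2,3\}$, which is nontrivial; this confirms the semi-direct (non-direct) structure and is exactly the data needed later for counting the $16\times 24=384$ formal symmetries and their action on the 192 solutions.

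The main obstacle I anticipate is not any single hard computation but the need to be scrupulous about \emph{well-definedness} of $\Gamma$: because an element of $G_\mathrm{II}$ simultaneously changes the apparent underlying torus $\mathbb{C}/\Lambda$ on which the equation lives (the point flagged in \S\ref{tran1} and handled there via the family $\mathbb{E}$), one must confirm that the conjugate $g[\gamma]_-g^{-1}$ really is again one of the sixteen exponent-sign-changes of the \emph{transformed} equation and depends only on the image of $g$ in the relevant permutation group, not on the chosen representative or on the auxiliary choice of ordered basis. I would resolve this by phrasing the whole argument on $\mathbb{E}$ (Weierstrass form), where $G_\mathrm{II}$ acts honestly by biholomorphisms as in Definition after \eqref{E:communtative} and $G_\mathrm{I}$ is manifestly the deck-type group of exponent transpositions commuting with the projection $\pi_3$; the normality and the formula for $\Gamma$ then follow from the commutative diagram \eqref{E:communtative} together with \eqref{E:ej}, and transport back to the Jacobian form via Table~\ref{T0} is purely formal.
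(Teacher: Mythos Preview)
Your proposal is correct and follows essentially the same approach as the paper: identify $G_\mathrm{I}$ with maps $\{0,1,2,3\}\to\Z_2$, identify $G_\mathrm{II}$ with the permutation group of $\{0,1,2,3\}$ via Remark~\ref{R:permutation}, and verify that the natural permutation action $\Gamma$ agrees with conjugation $\Gamma(X)(s)=XsX^{-1}$. The paper's proof is considerably terser than yours---it simply declares the last verification ``routine'' and omits the explicit internal semi-direct product checks (normality, trivial intersection, generation) and the well-definedness discussion over $\mathbb{E}$ that you spell out.
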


\begin{proof}

Identify $G_\mathrm{I}$ as the group of mappings $\{0,1,2,3\}\to\Z_2$, and identify $G_\mathrm{II}$ as the group of permutations of half-periods. That is, the permutations group of $\{0,1,2,3\}$ (see Remark \ref{R:permutation}). Then $G_\mathrm{II}$ acts on $G_\mathrm{I}$ from the left and we call this action $\Gamma:G_\mathrm{II}\to\mathrm{Aut}(G_\mathrm{I})$. It remains to show that for each $s\in G_\mathrm{I}$ and $X\in G_\mathrm{II}$,
\[
\Gamma(X)(s)=X\,s\,X^{-1}.
\]
But this verification is routine.
\end{proof}

\begin{remark}
Since $G_\mathrm{II}$ is a permutation group, the group $G$ above is indeed the group of \textit{signed permutations}. It can also be described as the Coxeter group $B_4$.
\end{remark}






\section{Group actions on accessory parameters}\label{S:accessory}
Under the combined actions of $G\cong G_\mathrm{I}\rtimes_\Gamma G_\mathrm{II}$, we have
\begin{theorem}  For each $X=I,\, A,\, B,\, C,\, D,\, E$ ; $i=0,1,\, 2,\, 3$, $X_i$ transforms the Darboux equation (\ref{E:darboux}) to the form
{\beqn\label{darboux1}
		&&\frac{{d}^{2}y}{{dw}^{2}}+\Big[h_X-{\sigma_{X_i}(\xi)(\sigma_{X_i}(\xi)+1)}{\ts\ns^2(w,\kappa_X)}-
{\ts\sigma_{X_i}(\eta)(\sigma_{X_i}(\eta)+1)\dc^2(w,\kappa_X)}\nonumber\\&&\ -
{\ts\sigma_{X_i}(\mu)(\sigma_{X_i}(\mu)+1)\kappa_X^2\cd^2(w,\kappa_X)}-\sigma_{X_i}(\nu)(\sigma_{X_i}(\nu)+1)\kappa_X^2{\ts\sn^2(w,\kappa_X)}\Big]y=0,\nonumber\\
\eeqn}
with the Riemann $P$-scheme
\[
	P_{\mathbb{C}\slash\Lambda}\begin{Bmatrix}
\ 0\ &\ K(\kappa_X)\ &\ K(\kappa_X)+iK'(\kappa_X)\ &\ iK'(\kappa_X)\ &\\
\sigma_{X_i}(\xi)+1&\sigma_{X_i}(\eta)+1&\sigma_{X_i}(\mu)+1&\sigma_{X_i}(\nu)+1&w;\, h_X\\
-\sigma_{X_i}(\xi)&-\sigma_{X_i}(\eta)&-\sigma_{X_i}(\mu)&-\sigma_{X_i}(\nu)&
\end{Bmatrix},
	\]
\noindent for some $h_X$, where $w:=\tau_{X_i}(u,k)$ and $\kappa_X$ are listed in Table \ref{T1}, and $\sigma_{X_i}$ is a permutation of $\{\xi,\eta,\mu,\nu\}$ as listed in Table \ref{T2}.
\end{theorem}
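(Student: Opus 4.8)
The plan is to reduce the statement to the two separate actions already analyzed in the preceding sections, namely the action of $G_\mathrm{I}$ (transpositions of local solutions, \S\ref{S:g1}) and the action of $G_\mathrm{II}$ on the Jacobian form (\S\ref{tran2}, especially Table \ref{T1}), and then to track how these two combine on a Darboux equation in the symmetric Jacobian form (\ref{E:darboux}). First I would fix $X\in\{I,A,B,C,D,E\}$ and $i\in\{0,1,2,3\}$ and perform the change of independent variable $w=\tau_{X_i}(u,k)$ together with the change of modulus $k\mapsto\kappa_X$ dictated by the corresponding row of Table \ref{T1}. The point of Table \ref{T1} is precisely that each of the four doubly-periodic coefficient functions $\ns^2(u,k),\ \dc^2(u,k),\ k^2\cd^2(u,k),\ k^2\sn^2(u,k)$ is carried, up to a constant multiple depending on $k$, to one of the functions $\ns^2(w,\kappa_X),\ \dc^2(w,\kappa_X),\ \kappa_X^2\cd^2(w,\kappa_X),\ \kappa_X^2\sn^2(w,\kappa_X)$; substituting the last three columns of Table \ref{T1} into (\ref{E:darboux}) and squaring produces exactly this, and the resulting bookkeeping of which function goes to which is recorded by the permutation $\sigma_{X_i}$ of $\{\xi,\eta,\mu,\nu\}$.

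The second ingredient is that after this substitution the prefactors of the coefficient functions need not be exactly of the form $\gamma(\gamma+1)$ with the \emph{same} $\gamma$: a relabelling of the half-periods permutes the exponents at the four singular points, but the local exponents themselves are only defined up to the transposition $\gamma^+\leftrightarrow\gamma^-$ of \S\ref{S:g1}. Concretely, when $\tau_{X_i}$ sends, say, the singular point $u=0$ to one of the other half-periods, the incoming exponent pair $\{\xi+1,-\xi\}$ lands where an exponent pair $\{\gamma+1,-\gamma\}$ with $\gamma\in\{\xi,\eta,\mu,\nu\}$ is expected, but possibly in the opposite order; absorbing that order-reversal into $G_\mathrm{I}$ via $\gamma\mapsto\gamma^-=-\gamma-1$ is exactly why the permutation appearing on the parameters is $\sigma_{X_i}$ (built from the $G_\mathrm{II}$-permutation of half-periods composed with a $G_\mathrm{I}$-sign-change element), and is why $G\cong G_\mathrm{I}\rtimes_\Gamma G_\mathrm{II}$ from \S\ref{tran3} is the right group. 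So the argument is: (i) the half-period $\tau_{X_i}$ permutes the four singular points of (\ref{E:darboux}) and hence permutes the coefficient functions, giving the $G_\mathrm{II}$ part; (ii) reconciling the order of the two exponents at each singular point with the canonical Riemann $P$-scheme convention (top row $\gamma+1$, bottom row $-\gamma$) introduces at each point an element of $\{+,-\}$, i.e. the $G_\mathrm{I}$ part; (iii) composing, the coefficients of the transformed equation are $\sigma_{X_i}(\xi),\sigma_{X_i}(\eta),\sigma_{X_i}(\mu),\sigma_{X_i}(\nu)$, and since $\gamma^\pm(\gamma^\pm+1)=\gamma(\gamma+1)$ the displayed equation (\ref{darboux1}) and its $P$-scheme follow.

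For the accessory parameter, the change of variable $u\mapsto w=\tau_{X_i}(u,k)=c\,u$ (or $c\,(u+\text{half-period})$) with a nonzero constant $c=c(k)$ read off from the $\tau_{X_i}$-column of Table \ref{T1} scales $d^2/du^2=c^2\,d^2/dw^2$, and likewise multiplies each coefficient function by $c^2$ (this is the squared-modulus identity underlying Table \ref{T1}); dividing through by $c^2$ leaves the constant term as $h/c^2$, so one sets $h_X:=h/c^2$ — equivalently $h_X$ is the unique constant making the transformed equation again a Darboux equation in symmetric form. I would state this scaling explicitly and note that the translations $I_j$ contribute $c$-factors $1$, $k'$, $k$ according to the $A,B,C,D,E$-block (the pure translations $I_1,I_2,I_3$ do not rescale $u$ and only permute the singular points). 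The main obstacle — really the only nontrivial content — is the verification of (i) and (ii): checking, case by case over the $24$ rows of Table \ref{T1}, that squaring the entries in the last three columns and combining with the modulus entries $\kappa_X$ indeed reproduces the four functions $\ns^2,\dc^2,\kappa_X^2\cd^2,\kappa_X^2\sn^2$ in the orders prescribed by $\sigma_{X_i}$, using the standard reciprocal and quarter-period shift identities for Jacobi elliptic functions (e.g. $\ns=1/\sn$, $\dc=\dn/\cn$, $\sn(u+K)=\cd(u,k)$, etc.). This is routine but lengthy; I would carry out one representative case ($X=A$, $i=1$ say) in detail and remark that the others are entirely analogous, with the outcome tabulated in Table \ref{T2}.
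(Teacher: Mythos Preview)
Your overall plan --- substitute $w=\tau_{X_i}(u,k)$ and $\kappa_X$ from Table~\ref{T1}, square the last three columns, and read off the permutation $\sigma_{X_i}$ --- is exactly what the paper does (the theorem is stated without separate proof; its content is the construction of Tables~\ref{T1} and~\ref{T2}).  Two points need correction, one minor and one a genuine gap.

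The minor point: your step~(ii) is superfluous.  The $\sigma_{X_i}$ in the statement are \emph{pure} permutations of $\{\xi,\eta,\mu,\nu\}$ (see Table~\ref{T2}); no $G_\mathrm{I}$ sign-flip is ever composed in.  Since the equation~(\ref{E:darboux}) depends only on $\gamma(\gamma+1)$, the exponent pair $\{\gamma+1,-\gamma\}$ carries no ordering information at the level of the equation, so there is nothing to ``reconcile''.  The paper's Remark following the theorem says exactly this: $G_\mathrm{I}$ leaves the Darboux equation unchanged.  The $G_\mathrm{I}$ part only becomes relevant later, when one enumerates the $192$ \emph{solutions}.

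The genuine gap is your computation of $h_X$.  You claim each coefficient function is carried to another ``up to a constant multiple depending on $k$'' and conclude $h_X=h/c^2$.  This is false for $X=A,B,D,E$: the Jacobi identities you need (e.g.\ $\cs^2=\ns^2-1$, $\dc^2=k^{\prime2}\nc^2+k^2$, $\nd^2=k^{\prime-2}(1-k^2\cd^2)$) introduce \emph{additive} constants, not just multiplicative ones, and these constants are what produce the parameter-dependent shifts in Table~\ref{T2}.  For instance, under $B_0$ ($w=-iu$, $\kappa_B=k'$) one has $\ns^2(u,k)=-\cs^2(w,k')=1-\ns^2(w,k')$, so the $\xi(\xi+1)\ns^2(u,k)$ term contributes $-\xi(\xi+1)$ to the constant; collecting all four such contributions and dividing by $c^2=-1$ gives
\[
h_B=-h+\xi(\xi+1)+\eta(\eta+1)+\mu(\mu+1)+\nu(\nu+1),
\]
as in Table~\ref{T2}, not $h_B=-h$.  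Your representative case $A_1$ would reveal the same phenomenon.  So the scaling argument must be replaced by the honest bookkeeping of both multiplicative and additive constants when rewriting each squared Jacobi function in the new modulus; this is routine but is where the actual content of Table~\ref{T2} lives.
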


\begin{remark}
The group $G_\mathrm{I}$ leaves the Darboux equation unchanged.
\end{remark}
\begin{remark}
The actions of Klein four-group $K$ correspond to shifting of the half-periods of the torus, hence resulting in permuting the four Jacobi elliptic functions in the Darboux potential, it obviously leaves the accessory parameter $h_{X_i}$ unaltered, so we can denote the accessory parameter by $h_X$, instead of $h_{X_i}$.
\end{remark}


The description of $h_X$ for each transformation $X_i$ can be found in Table \ref{T2}.
One can see from Table \ref{T2} that these new accessory parameters can be divided into six types.

{\footnotesize
\begin{table}
\caption{}\label{T2}
\begin{center}

\small
\begin{tabular}{c||c||c|c|c|c}
  ${X}$ & $h_X$ & $\sigma_{X_0}$ & $\sigma_{X_1}$ & $\sigma_{X_2}$ & $\sigma_{X_3}$\\
  \hline
  $I$ & $h$ & $(\xi)(\eta)(\mu)(\nu)$ & $(\xi\eta)(\mu\nu)$ & $(\xi\mu)(\eta\nu)$ & $(\xi\nu)(\eta\mu)$\\
  \hline
  $A$ & ${k'}^{-2}[h-k^2(\xi(\xi+1)+\eta(\eta+1)+\mu(\mu+1)+\nu(\nu+1))]$ & $(\xi)(\eta)(\mu\nu)$ & $(\xi\eta)(\mu)(\nu)$ & $(\xi\mu\eta\nu)$ & $(\xi\nu\eta\mu)$\\
  \hline
  $B$ & $-h+\xi(\xi+1)+\eta(\eta+1)+\mu(\mu+1)+\nu(\nu+1)$ & $(\xi)(\mu)(\eta\nu)$ & $(\xi\eta\mu\nu)$
  & $(\xi\mu)(\eta)(\nu)$ & $(\xi\nu\mu\eta)$\\
  \hline
  $C$ & $hk^{-2}$ & $(\xi)(\nu)(\eta\mu)$ & $(\xi\eta\nu\mu)$ & $(\xi\mu\nu\eta)$ & $(\xi\nu)(\mu)(\eta)$\\
  \hline
  $D$ & $h{k'}^{-2}[-h+(\xi(\xi+1)+\eta(\eta+1)+\mu(\mu+1)+\nu(\nu+1))]$ & $(\xi)(\eta\mu\nu)$ & $(\xi\eta\nu)(\mu)$
  & $(\xi\mu\eta)(\nu)$ & $(\xi\nu\mu)(\eta)$\\
  \hline
  $E$ & $-hk^{-2}+\xi(\xi+1)+\eta(\eta+1)+\mu(\mu+1)+\nu(\nu+1)$ & $(\xi)(\eta\nu\mu)$ & $(\xi\eta\mu)(\nu)$ & $(\xi\mu\nu)(\eta)$ & $(\xi\nu\eta)(\mu)$\\
\end{tabular}
\end{center}
\end{table}}
\medskip

On the other hand, the accessory parameters of the Darboux equation in the Weierstrass form after transformation are listed in Table \ref{TT1}.
In this case, there are only three types of accessory parameters after the symmetry group transformations. Moreover, they are only constant multiples of the ``original" accessory parameter. It is because the potential of the Darboux equation in the Weierstrass form has an extra constant multiple after the actions of $G_{\mathrm{II}}$ (see (\ref{E:newdarbouxW}) in \S\ref{G2W}).
\medskip

\begin{table}
\caption{}\label{TT1}
\begin{center}

\begin{tabular}{c||c}
  ${X}$ & $h_X$\\
  \hline
  $I,\, C$ & $h$\\
  \hline
  $A,\, E$ & $(\tau-1)^2h$\\
  \hline
  $B,\, D$ & $\tau^2h$\\
\end{tabular}
\end{center}
\end{table}



\section{Expansion of local solutions: Darboux functions}\label{S:expansion}

Historically, it was Ince \cite[(1940)]{Ince1} who first considered the existence of \textit{Lam\'e functions} by developing infinite series expansions composed of Jacobian elliptic functions as solutions of the Lam\'e equation. Ince also had an alternative \textit{Fourier-Jacobi expansions} approach to Lam\'e equations \cite{Ince2}. This puts the study of Lam\'e functions in an equal standing as that of the Lam\'e polynomials.
Ince's work turns out to be important in Novikov's formulation of finite-gap potential of Lax pair representation of KdV with periodic boundary condition (see \cite{Dub,Novikov,IM,GW1} for instance). All these expansions are valid on certain domains lying on tori. We extend Ince's ideas to represent local solutions of the Darboux equation of Jacobian form in terms of infinite expansions of Jacobi elliptic functions.

In this section, let us define one local solution at $u=0$ with exponent $\xi+1$
and call it the {\it local Darboux solution}, denoted by $Dl(\xi,\eta,\mu,\nu;h;u,k)$. The expansions of $Dl$ at the other regular singular points $K,\, iK^\prime,\, K+iK^\prime$ can be obtained after applying the symmetries of the Darboux equation considered in \S\ref{S:g1} (for $G_\mathrm{I}$) and \S\ref{tran2} (for $G_\mathrm{II}$).
If $\xi=-\frac{3}{2},-\frac{5}{2},\cdots$, then $Dl(\xi,\eta,\mu,\nu;h;u,k)$ will generically be logarithmic and we do not discuss this degenerate case  further in this paper.

\begin{definition}
Suppose that $\xi\neq-\frac{3}{2},-\frac{5}{2},\cdots$. Let $Dl(\xi,\eta,\mu,\nu;h;u,k)$ be defined by the following series expansion

\begin{equation}\label{E:series}\sn(u,k)^{\xi+1}\cn(u,k)^{\eta+1}\dn(u,k)^{\mu+1}\sum_{m=0}^\infty C_m\sn(u,k)^{2m},
\end{equation}

where the coefficients $C_m(\xi,\eta,\mu,\nu;h;k)$ satisfy the relation
{\beqn\label{3term}&&(2m+2)(2m+2\xi+3)C_{m+1}\nonumber\\
&&\hspace{.5cm} +\{h-[2m+\eta+\xi+2]^2-k^2[2m+\mu+\xi+2]^2+(k^2+1)(\xi+1)^2\}C_{m}\nonumber\\
&&\hspace{1cm} +k^2(2m+\xi+\eta+\mu+\nu+2)(2m+\xi+\eta+\mu-\nu+1)C_{m-1}=0,\nonumber\\ \eeqn}
where $m\geq0$ and the initial conditions $C_{-1}=0$, $C_0=1$.
\end{definition}

\begin{remark}\label{R:why-jacobian} The factor in front of the summation sign of (\ref{E:series}) is multi-valued in general. Indeed if we expand this solution in terms of the Weierstrass elliptic form, then one would encounter one more ambiguity of having an extra square-root sign in addition to the complex power that appears in the first factor in front of the expansion \eqref{E:series}.
\end{remark}

For brevity, let
$$M_m(\xi):=(2m+2)(2m+2\xi+3);$$
$$L_m(\xi,\,\eta,\mu;h;k):=h-[2m+\eta+\xi+2]^2-k^2[2m+\mu+\xi+2]^2+(k^2+1)(\xi+1)^2;$$
$$K_m(\xi,\eta,\mu,\nu;k):=k^2(2m+\xi+\eta+\mu+\nu+2)(2m+\xi+\eta+\mu-\nu+1)$$
such that (\ref{3term}) can be written as
	\begin{equation}\label{E:recursion}
		M_mC_{m+1}+L_mC_m+K_mC_{m-1}=0,\ m\geq0.
	\end{equation}

We show that the series always converges for $|\sn(u,k)|<\min(1,|k|^{-1}).$ Before that, we look at the conditions for the series to be terminating,
which is the case we need not concern about the convergence.

\begin{theorem}\label{terminate}
If there exists a positive integer $q$ such that either 
\begin{equation}\label{E:termination}
		\xi+\eta+\mu+\nu=-2q-4\quad\mbox{ or }\quad \xi+\eta+\mu-\nu=-2q-3,
	\end{equation} 
holds,
then there exist $q+1$ values $h_0,\, \cdots,\, h_q$ of $h$ such that
the series $Dl(\xi,\eta,\mu,\nu;h_j;u,k)$ ($j=0,1,\cdots,q$) terminates.
\end{theorem}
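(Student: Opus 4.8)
\emph{Proof strategy.} The plan is to recast the requirement that the series \eqref{E:series} terminate as the vanishing of a single polynomial in $h$, and then invoke the fundamental theorem of algebra; this is in the classical spirit of Ince's treatment of Lam\'e polynomials.

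The first step is the arithmetic observation that makes truncation of the three-term recursion \eqref{E:recursion} possible at all: \emph{each} of the two hypotheses in \eqref{E:termination} forces $K_{q+1}=0$. Indeed, straight from the definition of $K_m$, $K_{q+1}=k^2(\xi+\eta+\mu+\nu+2q+4)(\xi+\eta+\mu-\nu+2q+3)$, and the first (resp.\ second) relation in \eqref{E:termination} annihilates the first (resp.\ second) factor. Together with the standing hypothesis $\xi\neq-\tfrac32,-\tfrac52,\cdots$, which gives $M_m=(2m+2)(2m+2\xi+3)\neq0$ for all $m\ge0$, one then reads \eqref{E:recursion} forward: once $K_{q+1}=0$, if $C_{q+1}=0$ the $m=q+1$ instance of \eqref{E:recursion} becomes $M_{q+1}C_{q+2}=0$, so $C_{q+2}=0$, and inductively $C_m=0$ for every $m\ge q+1$. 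Hence, once $\xi,\eta,\mu,\nu,k$ are fixed so that \eqref{E:termination} holds, the series $Dl(\xi,\eta,\mu,\nu;h;u,k)$ terminates \emph{precisely} when $C_{q+1}(h)=0$.

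The second step is to count the roots of $C_{q+1}(h)$ via a tridiagonal determinant. I would view \eqref{E:recursion} for $m=0,1,\dots,q$, together with $C_{-1}=0$ and the desired $C_{q+1}=0$, as a homogeneous linear system in $C_0,\dots,C_q$ with coefficient matrix the $(q+1)\times(q+1)$ tridiagonal matrix $T_q(h)$ having diagonal $(L_0,\dots,L_q)$, superdiagonal $(M_0,\dots,M_{q-1})$ and subdiagonal $(K_1,\dots,K_q)$. Since $M_0,\dots,M_{q-1}\neq0$, any solution with $C_0=0$ is identically zero; so a nonzero solution exists iff $\Delta_q(h):=\det T_q(h)=0$, and when it does, it can be normalised to $C_0=1$, matching the prescribed initial data $C_{-1}=0,\ C_0=1$. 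The only $h$-dependence of $T_q$ sits on the diagonal, in the monic linear polynomials $L_0,\dots,L_q$; expanding the tridiagonal determinant (the only non-identity permutations surviving the Leibniz formula are products of disjoint adjacent transpositions, each trading a factor $L_iL_{i+1}$ for $-M_iK_{i+1}$ and so dropping the $h$-degree by two) shows $\Delta_q(h)$ is monic of degree exactly $q+1$ in $h$. Over $\mathbb C$ it therefore has $q+1$ zeros $h_0,\dots,h_q$, counted with multiplicity. For each $h_j$, the corresponding normalised solution $(1,C_1,\dots,C_q)$, extended by $C_m=0$ for $m\ge q+1$, satisfies \eqref{E:recursion} for all $m\ge0$ — for $m\le q$ by construction, for $m=q+1$ because $C_{q+1}=K_{q+1}=0$, and for $m>q+1$ trivially — so $Dl(\xi,\eta,\mu,\nu;h_j;u,k)$ collapses to the finite sum $\sn(u,k)^{\xi+1}\cn(u,k)^{\eta+1}\dn(u,k)^{\mu+1}\sum_{m=0}^{q}C_m\sn(u,k)^{2m}$.

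The computation is largely routine, so there is no serious obstacle; the two places where the hypotheses are genuinely needed are the identity $K_{q+1}=0$ (the structural reason a three-term recursion can truncate at all) and the non-degeneracy of $\Delta_q$, which rests on $M_m\neq0$, hence on $\xi\neq-\tfrac32,-\tfrac52,\cdots$. I would also add a remark that the $q+1$ values $h_j$ are not asserted to be distinct; proving that the roots of $\Delta_q$ are simple — and real for real parameters, the analogue of the classical statement for Lam\'e polynomials — would require further hypotheses on $\xi,\eta,\mu,\nu,k$ and is unnecessary here.
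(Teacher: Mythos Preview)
Your proof is correct and follows essentially the same route as the paper: both reduce termination to the pair of conditions $K_{q+1}=0$ and $C_{q+1}=0$, observe that \eqref{E:termination} is exactly $K_{q+1}=0$, and then locate the admissible values of $h$ as the zeros of $C_{q+1}(h)$. The paper phrases $C_{q+1}=0$ as the vanishing of the finite continued fraction \eqref{fcf} and simply asserts the existence of $q{+}1$ values of $h$; you instead recast it as the vanishing of the tridiagonal determinant $\Delta_q(h)$ and \emph{prove} that this determinant is monic of degree $q{+}1$ in $h$. These two reformulations are classically equivalent (the principal minors of a tridiagonal matrix satisfy the same three-term recursion, so $C_{q+1}$ and $\Delta_q$ differ only by the nonzero factor $\prod_{m=0}^{q}M_m^{-1}$), so the difference is one of presentation rather than strategy. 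What your version buys is completeness: the degree count, together with the explicit use of $M_m\neq0$, actually justifies the ``$q{+}1$ values'' claim that the paper leaves implicit, and your closing caveat about possible multiplicity is appropriate since neither argument establishes simplicity of the roots.
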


\begin{proof}
It follows from $(\ref{3term})$ that the local Darboux solution becomes the finite series
$$\sn(u,k)^{\xi+1}\cn(u,k)^{\eta+1}\dn(u,k)^{\mu+1}\sum_{m=0}^q C_m\sn(u,k)^{2m} \mbox{ with } C_q\neq0$$
if and only if
$$K_{q+1}(\xi,\eta,\mu,\nu;k)=0=C_{q+1}(\xi,\eta,\mu,\nu;h;k).$$
Notice that $C_{q+1}(\xi,\eta,\mu,\nu;h;k)=0$ is equivalent to saying the vanishing of the finite continued-fraction
\beqn\label{fcf} f(\xi,\eta,\mu,\nu;h;k):=L_0/M_0-\frac{K_1/M_1}{L_1/M_1-}\frac{K_2/M_2}{L_2/M_2-}\cdots\frac{K_q/M_q}{L_q/M_q}=0.\eeqn

If $\xi,\eta,\mu,\nu$ are chosen such that $K_{q+1}(\xi,\eta,\mu,\nu;k)=0,$ that is,
(\ref{E:termination}) holds, then
there exist $q+1$ values $h_0,\, \cdots,\, h_q$ of $h$ such that (\ref{fcf}) holds,
and hence the series terminates.
\end{proof}
In this terminating case, we call the solution the {\it Darboux polynomial}, denoted by $Dp(\xi,\eta,\mu,\nu;h_j;u,k)$ ($j=0,\cdots,q$). {Note that the Darboux polynomials are generalisation of the classical Lam\'e polynomials which are eigensolutions to the Lam\'e equation. The Lam\'e polynomials are orthogonal polynomials which have some remarkable properties. Some of them are discovered only recently
(see e.g. Grosset and Veselov \cite{GV,GV1} (2006, 2008), Borcea and Shapiro \cite{BS} (2008)).}

\begin{remark}
The condition on $\xi$, $\eta$, $\mu$, $\nu$ so that a solution of Darboux equation exists in the form of a Darboux polynomial is indeed the condition that the Picard-Vessiot extension  of the given Darboux equation
is a Liouvillian extension of the field of elliptic functions. This criterion can also be obtained by a standard application of Kovacic algorithm \cite{DL}.
\end{remark}

Now we discuss the convergence when the series is in general non-terminating. 
\begin{theorem}\label{convergent}
Suppose that $Dl(\xi,\eta,\mu,\nu;h;u,k)$ is non-terminating, i.e., not a Darboux polynomial. Then it converges on the domain $\{|\sn u|<
\max(1,|k|^{-1})\}$ ($|k|\neq1$) if \beqn\label{ifcf} g(\xi,\eta,\mu,\nu;h;k):=L_0/M_0-\frac{K_1/M_1}{L_1/M_1-}\frac{K_2/M_2}{L_2/M_2-}\cdots=0\eeqn
holds. Otherwise, it converges only on the domain $\{|\sn u|<\min(1,|k|^{-1})\}$.
\end{theorem}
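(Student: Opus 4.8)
The plan is to analyze the three-term recursion \eqref{E:recursion} asymptotically as $m\to\infty$ and to identify the radius of convergence of the series \eqref{E:series} in the variable $\sn(u,k)$ via the growth rate of $C_m$. First I would compute the leading behaviour of the coefficients $M_m$, $L_m$, $K_m$ for large $m$: from the definitions one has $M_m\sim 4m^2$, $K_m\sim 4k^2 m^2$, while $L_m\sim -(1+k^2)\cdot 4m^2$ (the $m^2$-terms of the two squared brackets). Dividing \eqref{E:recursion} through by $4m^2$, the recursion for the ratio $r_m:=C_m/C_{m-1}$ becomes, in the limit, $r_{m+1} - (1+k^2) + k^2/r_m \to 0$, i.e. the limiting ratios satisfy $r^2-(1+k^2)r+k^2=0$, whose two roots are $r=1$ and $r=k^2$. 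This is the standard Poincar\'e–Perron situation: generically $C_m/C_{m-1}\to \max(1,|k|^2)$, which gives radius of convergence $\min(1,|k|^{-1})$ for $\sum C_m \sn^{2m}$, hence the series converges on $\{|\sn u|<\min(1,|k|^{-1})\}$, the "otherwise" case.

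Second, I would treat the exceptional (minimal-solution) case. By Pincherle's theorem, the continued fraction in \eqref{ifcf} converges to a finite value precisely when the recursion \eqref{E:recursion} admits a minimal (recessive) solution, and the prescribed initial data $C_{-1}=0$, $C_0=1$ realize that minimal solution exactly when $g(\xi,\eta,\mu,\nu;h;k)=0$; in that case $C_m/C_{m-1}\to$ the \emph{smaller} of the two characteristic roots, namely $\min(1,|k|^2)$, so the radius of convergence improves to $\max(1,|k|^{-1})$. I would make the Pincherle correspondence precise: write the general solution of \eqref{E:recursion} as a combination of a dominant solution (ratios $\to\max(1,|k|^2)$) and a recessive one (ratios $\to\min(1,|k|^2)$), note that $C_{q+1}$ viewed as a function of $h$ is (up to the product $M_0\cdots M_q$) the numerator of the $q$-th convergent of the continued fraction, and pass to the limit $q\to\infty$ to see that $(C_m)$ is the recessive solution iff the infinite continued fraction $g$ vanishes. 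The non-terminating hypothesis guarantees the two characteristic roots are genuinely distinct (one needs $|k|\neq 1$, which is assumed) and that neither solution is eventually zero, so the dichotomy dominant/recessive is exhaustive.

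Third, I would convert a bound on $\limsup |C_m|^{1/m}$ into an honest statement about convergence of \eqref{E:series} as a function of $u$ on a region of the torus: since $\sn(\cdot,k)^{\xi+1}\cn(\cdot,k)^{\eta+1}\dn(\cdot,k)^{\mu+1}$ is a fixed (locally bounded, away from the singular locus) prefactor, the convergence of $\sum C_m \sn^{2m}$ is governed entirely by $|\sn u|$ versus the radius of convergence just computed, giving the stated domains $\{|\sn u|<\min(1,|k|^{-1})\}$ in general and $\{|\sn u|<\max(1,|k|^{-1})\}$ when $g=0$. The main obstacle I anticipate is making the Poincar\'e–Perron/Pincherle step fully rigorous rather than heuristic: the naive ratio argument can fail when $r_m$ approaches $0$ or $\infty$ along a subsequence, or when the two limiting roots coincide, so I would instead substitute $C_m = \rho^{m} D_m$ with $\rho=\max(1,|k|^2)$ (resp. $\rho=\min(1,|k|^2)$) and show $\sum D_m$ behaves like a convergent series by a more careful estimate — e.g. bounding the error terms in the perturbed recursion $M_m/(4m^2)=1+O(1/m)$ etc. and invoking a quantitative version of Perron's theorem — and separately verify, using the continued-fraction identity for $C_{q+1}(h)$, that the selection of the recessive solution is exactly the condition $g=0$. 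I expect the bookkeeping of which root is larger (depending on whether $|k|<1$ or $|k|>1$) and the corresponding $\min$ versus $\max$ to be the only other point requiring care.
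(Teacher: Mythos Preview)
Your proposal is correct and follows essentially the same route as the paper: compute the asymptotics $M_m\sim 4m^2$, $L_m\sim -4(1+k^2)m^2$, $K_m\sim 4k^2m^2$, deduce via Poincar\'e--Perron that $C_{m+1}/C_m$ tends to a root of $t^2-(1+k^2)t+k^2=0$ (namely $1$ or $k^2$), and then use the continued-fraction criterion to select the root and apply the ratio test. The only difference is packaging: the paper invokes Poincar\'e's theorem and Perron's theorem (stated in its appendix) as black boxes, whereas you unpack the selection mechanism through Pincherle's theorem and the dominant/recessive dichotomy---this is the same content, just made more explicit.
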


\begin{proof}

As
$$\lim_{m\ra\infty}M_m/4m^2=1,\ \lim_{m\ra\infty}L_m/4m^2=-1-k^2
\mbox{ and }
\lim_{m\ra\infty}K_m/4m^2=k^2,$$
it follows from Theorem \ref{poin}
that $\lim_{m\ra\infty}C_{m+1}/C_m$ exists, where the coefficient $C_m$ is defined in \eqref{E:recursion}, and is equal to
one of the roots of the quadratic equation
$$t^2-(1+{k}^2)t+k^2=0,$$
that is,
$\lim_{m\ra\infty}C_{m+1}/C_m=1\mbox{ or }k^2.$
In fact, the limit depends on whether the infinite continued fraction 
$g(\xi,\eta,\mu,\nu;h;k)=0$
holds or not.
By Theorem \ref{perron},
$$
\lim_{m\ra\infty}|C_{m+1}/C_m|=
\begin{cases}
\min(1,|k|^2)&\mbox{ if (\ref{ifcf}) holds}\\
\max(1,|k|^2)&\mbox{ otherwise}
\end{cases}.
$$
Then by the ratio-test,
the series converges for
$$|\sn u|<\begin{cases}
\max(1,|k|^{-1}) &\mbox{ if (\ref{ifcf}) holds}\\
\min(1,|k|^{-1}) &\mbox{ otherwise}
\end{cases}.$$
\end{proof}

\begin{definition}
If $h=\hat{h}$ is chosen such that (\ref{ifcf}) holds, then $Dl(\xi,\eta,\mu,\nu;h;u,k)$ converges on the larger domain $\{|\sn u|<
\max(1,|k|^{-1})\}$ and in this case we call the solution the {\it Darboux function}, denoted by $Df(\xi,\eta,\mu,\nu;\hat{h};u,k)$.
\end{definition}

\begin{remark}
When the parameters $\xi$, $\eta$, $\mu$ in the $Dl$ become zero (or $-1$), we recover Ince's series expansions
(see formulas (2.1), (3.1), (4.1), (4.3), (5.1), (5.3), (6.1), (6.3) in \cite{Ince1}).
\end{remark}

\section{The 192 solutions of the Darboux equation}\label{S:192}
In general, it is well-known that if a Fuchsian equation on $\mathbb{CP}^1$ has one of the exponent differences at a singular point is an integer, then one of the local  solutions corresponding to the singular point is generically logarithmic.
For the Darboux equation, it has a logarithmic solution only if
at least one of $\xi,\eta,\mu,\nu$ is in $\frac{2\Z+1}{2}$. One such exceptional case for $Dl(\xi,\eta,\mu,\nu;h;u,k)$ was already mentioned in the last section.
We shall not consider these degenerate cases henceforth.

Now using the automorphisms in $G\cong G_\mathrm{I}\rtimes_\Gamma G_\mathrm{II}$, we can generate $2\times2\times2\times24=192$ solutions of Darboux equation (\ref{E:darboux}) in the
following form when $\xi,\eta,\mu,\nu\notin\frac{2\Z+1}{2}$:
	\[
		 Dl\big(\sigma_{X_i}(\xi)^{s_\xi},\sigma_{X_i}(\eta)^{s_\eta},\sigma_{X_i}(\mu)^{s_\mu},\sigma_{X_i}(\nu);h_X;\tau_{X_i}(u,k),\kappa_X(k)\big),
	\]
where ${s_\xi},{s_\eta},{s_\mu}=+$ or $-$; $X=I,\, A,\,B,\, C,\, D,\, E$; $i=0,1,2,3$.
Note that the transformation $\sigma_{X_i}(\nu)\mapsto\sigma_{X_i}(\nu)^-$ does not give a new solution since by definition,
\beq &&Dl(\sigma_{X_i}(\xi)^{s_\xi},\sigma_{X_i}(\eta)^{s_\eta},\sigma_{X_i}(\mu)^{s_\eta},\sigma_{X_i}(\nu);h_X;\tau_{X_i}(u,k),\kappa_X(k))\\&=&
Dl(\sigma_{X_i}(\xi)^{s_\xi},\sigma_{X_i}(\eta)^{s_\eta},\sigma_{X_i}(\mu)^{s_\eta},\sigma_{X_i}(\nu)^-;h_X;\tau_{X_i}(u,k),\kappa_X(k)).\eeq

This is because the parameter $\nu$ does not present in the expansion of $Dl$.
Therefore each of 192 solutions can be identified by an element of
the Coxeter group $D_4\cong(\mathbb{Z}_2)^3\rtimes_\Gamma G_\mathrm{II}$.

Moreover, the 192 solutions split into 8 sets of 24 formally distinct but equivalent expressions, where each set defining the local
solution corresponding to one of the two exponents in the neighborhood of one of the four singular points. For each $s,\,t=+$ or $-$ and  $X=I,\,A,\,B,\, C,\, D,\, E$:
\smallskip

\eb
\item{the local solutions at $u=0$ with exponent $\xi+1$:}
$$Dl\big(\xi,\sigma_{X_0}(\eta)^s,\sigma_{X_0}(\mu)^t,\sigma_{X_0}(\nu);h_X;\tau_{X_0}(u,k),\kappa_X(k)\big),$$
\smallskip

\item{the local solutions at $u=0$ with exponent $-\xi$:}
$$Dl\big(-\xi-1,\sigma_{X_0}(\eta)^s,\sigma_{X_0}(\mu)^t,\sigma_{X_0}(\nu);h_X;\tau_{X_0}(u,k),\kappa_X(k)\big),$$
\smallskip

\item{the local solutions at $u=K(k)$ with exponent $\eta+1$:}
$$Dl\big(\eta,\sigma_{X_1}(\eta)^s,\sigma_{X_1}(\mu)^t,\sigma_{X_1}(\nu);h_X;\tau_{X_1}(u,k),\kappa_X(k)\big),$$
\smallskip

\item{the local solutions at $u=K(k)$ with exponent $-\eta$:}
$$Dl\big(-\eta-1,\sigma_{X_1}(\eta)^s,\sigma_{X_1}(\mu)^t,\sigma_{X_1}(\nu);h_X;\tau_{X_1}(u,k),\kappa_X(k)\big),$$
\smallskip

\item{the local solutions at $u=K(k)+iK'(k)$ with exponent $\mu+1$:}
$$Dl\big(\mu,\sigma_{X_2}(\eta)^s,\sigma_{X_2}(\mu)^t,\sigma_{X_2}(\nu);h_X;\tau_{X_2}(u,k),\kappa_X(k)\big),$$
\smallskip

\item{the local solutions at $u=K(k)+iK'(k)$ with exponent $-\mu$:}
$$Dl\big(-\mu-1,\sigma_{X_2}(\eta)^s,\sigma_{X_2}(\mu)^t,\sigma_{X_2}(\nu);h_X;\tau_{X_2}(u,k),\kappa_X(k)\big),$$
\smallskip

\item{the local solutions at $u=iK'(k)$ with exponent $\nu+1$:}
$$Dl\big(\nu,\sigma_{X_3}(\eta)^s,\sigma_{X_3}(\mu)^t,\sigma_{X_3}(\nu);h_X;\tau_{X_3}(u,k),\kappa_X(k)\big),$$
\smallskip

\item{the local solutions at $u=iK'(k)$ with exponent $-\nu$:}
$$Dl\big(-\nu-1,\sigma_{X_3}(\eta)^s,\sigma_{X_3}(\mu)^t,\sigma_{X_3}(\nu);h_X;\tau_{X_3}(u,k),\kappa_X(k)\big).$$
\ee
The above list gives all the 192 local series expansions for the Darboux equation.

\bigskip

The terminating conditions for the above 192 series can easily be obtained by applying the symmetry group $G\cong G_\mathrm{I}\rtimes_\Gamma G_\mathrm{II}$ to that for the $Dl$ in (\ref{E:termination}). We record this result in the following theorem.
\medskip

\begin{theorem}
If there exists a positive integer $q$ such that either 
$$\sigma_{X_i}(\xi)^{s_\xi}+\sigma_{X_i}(\eta)^{s_\eta}+\sigma_{X_i}(\mu)^{s_\mu}+\sigma_{X_i}(\nu)=-2q-4$$
or $$\sigma_{X_i}(\xi)^{s_\xi}+\sigma_{X_i}(\eta)^{s_\eta}+\sigma_{X_i}(\mu)^{s_\mu}-\sigma_{X_i}(\nu)=-2q-3,$$ 
holds, and
there exists a value of $h$ such that $h_X(\xi,\eta,\mu,\nu;h;k)$ satisfies the finite continued fraction
$$f(\sigma_{X_i}(\xi)^{s_\xi},\sigma_{X_i}(\eta)^{s_\eta},\sigma_{X_i}(\mu)^{s_\mu},\sigma_{X_i}(\nu);h_X;\kappa_X(k))=0,$$ 
then the series $Dl(\sigma_{X_i}(\xi)^{s_\xi},\sigma_{X_i}(\eta)^{s_\eta},\sigma_{X_i}(\mu)^{s_\mu},\sigma_{X_i}(\nu);h_X;\tau_{X_i}(u,k),\kappa_X(k))$
terminates.
\end{theorem}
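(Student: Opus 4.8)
The plan is to deduce the statement mechanically from the theorem on the terminating $Dl$ (Theorem \ref{terminate}) together with the explicit description of the action of $G\cong G_\mathrm{I}\rtimes_\Gamma G_\mathrm{II}$ on the $192$ solutions. First I would recall that, by the construction in Section \ref{S:192}, each of the $192$ series is literally of the form
\[
Dl\big(\sigma_{X_i}(\xi)^{s_\xi},\,\sigma_{X_i}(\eta)^{s_\eta},\,\sigma_{X_i}(\mu)^{s_\mu},\,\sigma_{X_i}(\nu);\,h_X;\,\tau_{X_i}(u,k),\,\kappa_X(k)\big),
\]
i.e.\ an honest instance of the local Darboux solution $Dl(\tilde\xi,\tilde\eta,\tilde\mu,\tilde\nu;\tilde h;\tilde u,\tilde k)$ with
\[
\tilde\xi=\sigma_{X_i}(\xi)^{s_\xi},\quad \tilde\eta=\sigma_{X_i}(\eta)^{s_\eta},\quad \tilde\mu=\sigma_{X_i}(\mu)^{s_\mu},\quad \tilde\nu=\sigma_{X_i}(\nu),\quad \tilde h=h_X,\quad \tilde k=\kappa_X(k).
\]
So the whole statement is just Theorem \ref{terminate} applied verbatim to the tuple $(\tilde\xi,\tilde\eta,\tilde\mu,\tilde\nu;\tilde h;\tilde k)$. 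Concretely: Theorem \ref{terminate} says that $Dl(\tilde\xi,\tilde\eta,\tilde\mu,\tilde\nu;\tilde h;\tilde u,\tilde k)$ terminates precisely when $K_{q+1}(\tilde\xi,\tilde\eta,\tilde\mu,\tilde\nu;\tilde k)=0$ (which is exactly one of the two displayed conditions $\tilde\xi+\tilde\eta+\tilde\mu+\tilde\nu=-2q-4$ or $\tilde\xi+\tilde\eta+\tilde\mu-\tilde\nu=-2q-3$) \emph{and} $\tilde h$ is one of the $q+1$ roots of the finite continued fraction $f(\tilde\xi,\tilde\eta,\tilde\mu,\tilde\nu;\tilde h;\tilde k)=0$. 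Substituting back the expressions for $\tilde\xi,\dots,\tilde k$ gives exactly the two termination conditions in the statement and the condition $f(\sigma_{X_i}(\xi)^{s_\xi},\dots;h_X;\kappa_X(k))=0$.

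The only genuine content beyond this substitution is the observation, already made in Section \ref{S:192}, that the symmetry group really does send a solution of the \emph{original} Darboux equation to a solution of the transformed one, and that the transformed equation is again a Darboux equation with the parameters, independent variable, modulus and accessory parameter read off from Tables \ref{T1}, \ref{T2} (this is the content of the theorem in Section \ref{S:accessory}). Granting that, termination of the transformed $Dl$ is equivalent — by the very definition of $Dl$ as the series \eqref{E:series} with recursion \eqref{3term} — to the vanishing of the transformed $K_{q+1}$ and of the transformed finite continued fraction $f$. I would also note the small caveat, pointed out after the list in Section \ref{S:192}, that the slot $\sigma_{X_i}(\nu)$ is insensitive to a sign change $\sigma_{X_i}(\nu)\mapsto\sigma_{X_i}(\nu)^-$, so there is no loss in writing only $\sigma_{X_i}(\nu)$ (unsuperscripted) throughout; this is why the termination conditions involve $\sigma_{X_i}(\nu)$ and not $\sigma_{X_i}(\nu)^{s_\nu}$.

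There is essentially no obstacle here: the proof is a one-line reduction to Theorem \ref{terminate} via the identification of the $192$ solutions as relabelled copies of $Dl$. The only point requiring a sentence of care is to confirm that the parameter $h$ that makes the \emph{original} equation have a terminating solution corresponds, under the affine relation $h\mapsto h_X(\xi,\eta,\mu,\nu;h;k)$ from Table \ref{T2}, to the value $h_X$ that makes the transformed continued fraction vanish; since $h\mapsto h_X$ is a bijection (an invertible affine, or at worst linear-fractional, map of $h$ in each row of Table \ref{T2}), the existence of a suitable $h$ is equivalent to the existence of a suitable $h_X$, which is exactly how the statement is phrased. I would therefore write the proof as: "This is immediate from Theorem \ref{terminate} applied to the tuple $(\sigma_{X_i}(\xi)^{s_\xi},\sigma_{X_i}(\eta)^{s_\eta},\sigma_{X_i}(\mu)^{s_\mu},\sigma_{X_i}(\nu);h_X;\tau_{X_i}(u,k),\kappa_X(k))$, using that each of the $192$ expressions of Section \ref{S:192} is by construction such a $Dl$, and that $h\mapsto h_X$ is a bijection."
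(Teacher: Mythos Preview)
Your proposal is correct and matches the paper's approach exactly: the paper does not give a separate proof but simply remarks that the terminating conditions for the 192 series ``can easily be obtained by applying the symmetry group $G\cong G_\mathrm{I}\rtimes_\Gamma G_\mathrm{II}$ to that for the $Dl$ in (\ref{E:termination}),'' which is precisely the substitution argument you spell out. Your additional remarks on the bijectivity of $h\mapsto h_X$ and the insensitivity to $\sigma_{X_i}(\nu)^-$ are accurate elaborations of points the paper leaves implicit.
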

\medskip

We record the remaining case when the infinite continued-fraction in (\ref{ifcf}) vanishes as
\medskip

\begin{theorem} If $h$ is chosen such that $h_X(\xi,\eta,\mu,\nu;h;k)$ satisfies the infinite continued fraction
$$g(\sigma_{X_i}(\xi)^{s_\xi},\sigma_{X_i}(\eta)^{s_\eta},\sigma_{X_i}(\mu)^{s_\mu},\sigma_{X_i}(\nu);h_X;\kappa_X(k))=0,$$ 
then the series $Dl(\sigma_{X_i}(\xi)^{s_\xi},\sigma_{X_i}(\eta)^{s_\eta},\sigma_{X_i}(\mu)^{s_\mu},\sigma_{X_i}(\nu);h_X;\tau_{X_i}(u,k),\kappa_X(k))$
converges on the larger domain $\{|\sn u|<\max(1,|\kappa_X(k)|^{-1})\}$ for $|\kappa_X(k)|^{-1}\neq1$.
\end{theorem}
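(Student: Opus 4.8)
The plan is to reduce this statement entirely to Theorem~\ref{convergent}, which already does all the analytic work for the base solution $Dl(\xi,\eta,\mu,\nu;h;u,k)$. The key observation is that every one of the $192$ solutions listed in \S\ref{S:192} is, by construction, of the form $Dl(\sigma_{X_i}(\xi)^{s_\xi},\sigma_{X_i}(\eta)^{s_\eta},\sigma_{X_i}(\mu)^{s_\mu},\sigma_{X_i}(\nu);h_X;\tau_{X_i}(u,k),\kappa_X(k))$ for suitable $X$, $i$, and signs, and that these arise from the original Darboux equation \eqref{E:darboux} by applying a symmetry $X_i$ together with a transposition in $G_\mathrm{I}$. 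Since $G_\mathrm{I}$ leaves the equation unchanged and $G_\mathrm{II}$ transforms \eqref{E:darboux} into the equation \eqref{darboux1} with modulus $\kappa_X$, independent variable $w=\tau_{X_i}(u,k)$, and accessory parameter $h_X$, the relevant series is literally the $Dl$-series of \emph{that} equation. Hence Theorem~\ref{convergent}, applied verbatim with $\xi,\eta,\mu,\nu,h,k$ replaced by $\sigma_{X_i}(\xi)^{s_\xi}$, $\sigma_{X_i}(\eta)^{s_\eta}$, $\sigma_{X_i}(\mu)^{s_\mu}$, $\sigma_{X_i}(\nu)$, $h_X$, $\kappa_X(k)$, gives precisely the claimed convergence on $\{|\sn w|<\max(1,|\kappa_X(k)|^{-1})\}$ whenever $g(\sigma_{X_i}(\xi)^{s_\xi},\sigma_{X_i}(\eta)^{s_\eta},\sigma_{X_i}(\mu)^{s_\mu},\sigma_{X_i}(\nu);h_X;\kappa_X(k))=0$.

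First I would record that the transformations in Table~\ref{T0} and Table~\ref{T1} are invertible biholomorphisms between the relevant domains on the tori, so that a solution of \eqref{darboux1} pulls back to a solution of \eqref{E:darboux}; this is essentially the content of the theorem of \S\ref{S:accessory} and the semidirect-product structure of $G$. Next I would note that the recursion \eqref{3term} is symbolic in the parameters: replacing $(\xi,\eta,\mu,\nu;h;k)$ by $(\sigma_{X_i}(\xi)^{s_\xi},\sigma_{X_i}(\eta)^{s_\eta},\sigma_{X_i}(\mu)^{s_\mu},\sigma_{X_i}(\nu);h_X;\kappa_X(k))$ yields exactly the recursion governing the coefficients $C_m$ of the series in question, with the same $m\to\infty$ asymptotics $M_m/4m^2\to 1$, $L_m/4m^2\to -1-\kappa_X^2$, $K_m/4m^2\to\kappa_X^2$, because those limits depend only on the leading coefficients, which are unchanged in form. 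Therefore the Poincaré--Perron analysis of Theorem~\ref{convergent} (i.e.\ the appeals to Theorem~\ref{poin} and Theorem~\ref{perron}) applies without modification, and the dichotomy between $\min(1,|\kappa_X|^2)$ and $\max(1,|\kappa_X|^2)$ for $\lim|C_{m+1}/C_m|$ is controlled by the vanishing of the infinite continued fraction $g$ with the substituted arguments. The ratio test then gives the domain $\{|\sn w|<\max(1,|\kappa_X(k)|^{-1})\}$ in the distinguished case.

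The only genuinely nontrivial point — and the one I would spell out with care — is the bookkeeping that the solution attached to $X_i$ in the list of \S\ref{S:192} really is the $Dl$-series of the \emph{transformed} equation \eqref{darboux1} rather than of some twisted variant: one must check that the prefactor $\sn^{\xi+1}\cn^{\eta+1}\dn^{\mu+1}$ transforms correctly under $\tau_{X_i}$ (using the entries of Table~\ref{T1} for $\sn(\tau_{X_i},\kappa_X)$, etc.), and that the permutation $\sigma_{X_i}$ of $\{\xi,\eta,\mu,\nu\}$ recorded in Table~\ref{T2} is exactly the one induced on the local exponents. This is the ``routine but necessary'' verification already invoked in the proof of the theorem in \S\ref{tran3} and in the derivation of the $192$ solutions; I would cite it rather than redo it. Once this identification is in place, the statement is immediate, so I do not expect any real obstacle beyond this consistency check; the analytic heart has already been established in Theorem~\ref{convergent}.
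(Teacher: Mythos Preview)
Your proposal is correct and is exactly the approach the paper takes: the paper does not give a separate proof of this theorem at all, but simply ``records'' it as the transport of Theorem~\ref{convergent} under the symmetry group $G\cong G_\mathrm{I}\rtimes_\Gamma G_\mathrm{II}$, i.e.\ the verbatim substitution $(\xi,\eta,\mu,\nu;h;k)\mapsto(\sigma_{X_i}(\xi)^{s_\xi},\sigma_{X_i}(\eta)^{s_\eta},\sigma_{X_i}(\mu)^{s_\mu},\sigma_{X_i}(\nu);h_X;\kappa_X(k))$. If anything, you have written out more of the consistency checks (the symbolic nature of the recursion~\eqref{3term}, the Poincar\'e--Perron limits with $\kappa_X$ in place of $k$) than the paper itself bothers to.
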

\bigskip

\section{Special cases}\label{S:Special}

Here we consider the symmetries of a number of special cases of the Darboux equation we have found. These special cases of differential equations have been studied
 by mathematicians since the beginning of the 19th century. We first return to the Lam\'e equation which we encountered in the Introduction.

\subsection{Lam\'e equation: (Eqn(\ref{E:darboux}) for $\xi,\eta,\mu=0$ or $-1$)} \label{LameEqn}

The \textit{Lam\'e equation}
	\beqn \label{Lame}
		\frac{{d}^{2}y}{{du}^{2}}+\big(h-\nu(\nu+1)k^2{\ts\sn^2u}\big)y=0,
	\eeqn
{with the Riemann $P$-scheme
\[P_{\mathbb{C}\slash\Lambda}\begin{Bmatrix}
\ iK'(k)\ &\\
\nu+1&u;\, h\\
-\nu&
\end{Bmatrix},\]}
\noindent was first derived by Lam\'e in 1837 \cite[XXIII]{WW} after separation of variables of Laplace's equation in three-space under the \textit{ellipsoidal coordinates}. Since then, the equation has been worked on by well-known mathematicians such as Heine \cite[(1878)]{Heine}, Hermite \cite[(1878)]{Hermite2}, Hobson \cite[(1892)]{Hobson1} in the 19th century, and Ince \cite[(1940)]{Ince1,Ince2} and Erd\'elyi \cite[(1940)]{Erdelyi3} in the 20th century. The Lam\'e equation has recently appeared in Knot theory \cite{IS1999}, PDEs and algebraic geometry \cite{CLW}, Integrable systems \cite{GW1,GW2} and finite-gap theory \cite{Novikov} which is already mentioned.


{
Clearly, the Lam\'e equation remains unchanged under the transformation $\nu\mapsto\nu^\pm$,
which induces the subgroup of $G_\mathrm{I}$ isomorphic to $\Z_2.$

Moreover, the transformation $A$ sends the half-period $\omega_3=iK'$ to $\omega_2=K+iK'$. Hence the transformation $A_1$ sends $\omega_3=iK'$ back to $\omega_2+\omega_1=\omega_3$. This transformation $A_1$ fixed the singularity $\omega_3$ and hence the form of the Lam\'e equation. Similarly, one reads off from
Table \ref{T1} that there exist only six transformations $X_i=I_0, A_1, B_2,C_0,D_2,E_1$ such that the Lam\'e equation preserves the form, that is, under such six transformations $X_i$, the equation (\ref{Lame}) becomes

\beqn\label{Lame1}
\frac{{d}^{2}y}{{dw}^{2}}+\Big(h_X-\nu(\nu+1)\kappa_X^2{\ts\sn^2(w,\kappa_X)}\Big)y=0,\eeqn
where $w=\tau_{X_i}(u,k)$, $\kappa_X$ and $h_X$ are given in Table \ref{T1} and \ref{T2} respectively.
These six transformations induce a subgroup of $G_\mathrm{II}$ isomorphic to $S_3.$
}


\subsection{Associated Lam\'e equation: (Eqn(\ref{E:darboux}) for $\xi,\eta=0$ or $-1$)}

{The associated Lam\'e equation

\beqn\label{AL}
\frac{{d}^{2}y}{{du}^{2}}+\Big(h-\mu(\mu+1)k^2{\ts\cd^2(u,k)}-\nu(\nu+1)k^2{\ts\sn^2(u,k)}\Big)y=0
\eeqn

has the Riemann $P$-scheme
\[P_{\mathbb{C}\slash\Lambda}\begin{Bmatrix}
\ K(k)+iK'(k)\ &\ iK'(k)\ &\\
\mu+1&\nu+1&u;\, h\\
-\mu&-\nu&
\end{Bmatrix}.\]

Clearly, the associated Lam\'e equation remains unchanged under $\mu\mapsto\mu^\pm$ and $\nu\mapsto\nu^\pm$,
which induce the subgroup of $G_\mathrm{I}$ isomorphic to $\Z_2\times\Z_2.$

Moreover, it follows from
Table \ref{T1} that there exist only four transformations $X_i=I_0, I_1, A_0,A_1$ such that the  associated Lam\'e equation
preserves the form, that is, under such four transformations $X_i$, the equation (\ref{AL}) becomes
{\begin{eqnarray}\label{AL1}
&&\frac{{d}^{2}y}{{dw}^{2}}+\Big(h_X-\sigma_{X_i}(\mu)(\sigma_{X_i}(\mu)+1)\kappa_X^2
{\ts\cd^2(u,\kappa_X)}\nonumber\\
&&\hspace{2cm}-\sigma_{X_i}(\nu)(\sigma_{X_i}(\nu)+1)\kappa_X^2{\ts\sn^2(w,\kappa_X)}\Big)y=0,\end{eqnarray}}
where $w=\tau_{X_i}(u,k)$, $\kappa_X$ and $h_X,\sigma_{X_i}$ are given in Table \ref{T0} and \ref{T2} respectively.
These four transformations induce a subgroup of $G_\mathrm{II}$ isomorphic to $\Z_2\times\Z_2$.
}


\section{Other transformations of the Darboux equation}
The exponent parameters $\xi,\eta,\mu,\nu$ are unrestricted in the transformation in $G_\mathrm{II}$.
In the section, we study the other kinds of the transformations of Darboux equation with one of the following restrictions: either ($\xi=\eta$, $\mu=\nu$) or
($\xi=\eta=\mu=\nu$).

For the case $\xi=\eta$, $\mu=\nu$, we consider the Landen transformation $L:(u,k)\mapsto \big((1+k')u,\frac{1-k'}{1+k'}\big)$
and under the transformation, we obtain the following \cite[p. 372]{EAM2}
$$\sn\bigg((1+k')u,\frac{1-k'}{1+k'}\bigg)=\frac{(1+k')\sn(u,k)\cn(u,k)}{\dn(u,k)};$$
$$\cn\bigg((1+k')u,\frac{1-k'}{1+k'}\bigg)=\frac{1-(1+k')\sn^2(u,k)}{\dn(u,k)};$$
$$\dn\bigg((1+k')u,\frac{1-k'}{1+k'}\bigg)=\frac{1-(1-k')\sn^2(u,k)}{\dn(u,k)}.$$
Then we can derive
$$(1+k')\dn((1+k')u,\frac{1-k'}{1+k'})=\dn(u,k)+k'\nd(u,k);$$
$$(1+k')\cs((1+k')u,\frac{1-k'}{1+k'})=\cs(u,k)-k'\sc(u,k).$$
Thus, let $w=(1+k')u$ and $\kappa^*=\frac{1-k'}{1+k'}$, we have
$$\ts(1+k')^2\kappa^{*2}\sn^2(w,\kappa^*)+k^2=k^2\sn^2(u,k)+k^2\cd^2(u,k);$$
$$\ts(1+k')^2\kappa^{*2}\ns^2(w,\kappa^*)+k^2=\ns^2(u,k)+\dc^2(u,k).$$
So under the Landen transformation $L$, the Darboux equation

{\beq
&&\frac{{d}^{2}y}{{du}^{2}}+\Big(h-{\xi(\xi+1)}{\ts\ns^2(u,k)}-{\ts\xi(\xi+1)\dc^2(u,k)}\nonumber\\&&\ \hspace{2cm} -
{\ts\nu(\nu+1)k^2\cd^2(u,k)}-\nu(\nu+1)k^2{\ts\sn^2(u,k)}\Big)y=0
\eeq} becomes
\beq\frac{{d}^{2}y}{{dw}^{2}}+\Big(h^*-\xi(\xi+1)}{\ts\ns^2(w,\kappa^*)-\nu(\nu+1)\kappa^{*2}{\ts\sn^2(w,\kappa^*)}\Big)y=0,\eeq
where $h^*=[h-k^2\xi(\xi+1)-k^2\nu(\nu+1)]/(1+k')^2$. Therefore, we obtain

\begin{theorem}[Landen Transformation Formula]
For $\xi=-\frac{3}{2},-\frac{5}{2},\cdots$, we have
$$Dl\bigg(\xi,0,0,\nu;h^*;(1+k')u,\frac{1-k'}{1+k'}\bigg)=(1+k')^{\xi+1}Dl(\xi,\xi,\nu,\nu;h;u,k),$$
where $h^*=[h-k^2\xi(\xi+1)-k^2\nu(\nu+1)]/(1+k')^2$.
\end{theorem}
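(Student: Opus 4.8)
The plan is to follow the same template already used in the excerpt to derive the Landen transformation formula at the level of the equation, and then promote it to a statement about the distinguished local solution $Dl$ by matching leading behaviour at the singular point $u=0$. Concretely, I would start from the Darboux equation in the symmetric (Jacobian) form with $\xi=\eta$, $\mu=\nu$ already displayed just before the theorem, apply the Landen change of variable $w=(1+k')u$, $\kappa^*=(1-k')/(1+k')$, and use the two scalar identities
$$(1+k')^2\kappa^{*2}\sn^2(w,\kappa^*)+k^2=k^2\sn^2(u,k)+k^2\cd^2(u,k),\qquad (1+k')^2\kappa^{*2}\ns^2(w,\kappa^*)+k^2=\ns^2(u,k)+\dc^2(u,k),$$
together with the chain rule $d^2/du^2=(1+k')^2\,d^2/dw^2$, to conclude that a solution $y(u)$ of the $\xi=\eta,\ \mu=\nu$ Darboux equation with accessory parameter $h$ is, after the substitution, a solution of the two-term equation in $w$ with modulus $\kappa^*$, exponent parameters $(\xi,0,0,\nu)$, and accessory parameter $h^*=[h-k^2\xi(\xi+1)-k^2\nu(\nu+1)]/(1+k')^2$. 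This is exactly the computation sketched in the lines preceding the theorem, so the only thing left is to track the multiplicative constant between the two normalised solutions.

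Next I would examine the local exponent at $u=0$ (equivalently $w=0$). The solution $Dl(\xi,\xi,\nu,\nu;h;u,k)$ is, by its definition (\ref{E:series}),
$$\sn(u,k)^{\xi+1}\cn(u,k)^{\xi+1}\dn(u,k)^{\nu+1}\bigl(1+O(\sn^2)\bigr),$$
with leading term $u^{\xi+1}(1+o(1))$ as $u\to 0$; likewise $Dl(\xi,0,0,\nu;h^*;w,\kappa^*)=\sn(w,\kappa^*)^{\xi+1}(1+O(\sn^2))$ has leading term $w^{\xi+1}(1+o(1))=(1+k')^{\xi+1}u^{\xi+1}(1+o(1))$. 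Both sides of the claimed identity solve the \emph{same} second-order linear ODE in $w$ (by the first paragraph), both are the Frobenius solution attached to the exponent $\xi+1$ at the regular singular point $w=0$, and — since $\xi\notin\frac{2\Z+1}{2}$, which is the exclusion "$\xi=-\tfrac32,-\tfrac52,\dots$" in the hypothesis — that Frobenius solution is unique up to a scalar. Comparing the two leading coefficients $(1+k')^{\xi+1}$ versus $1$ pins the scalar, giving
$$Dl\Bigl(\xi,0,0,\nu;h^*;(1+k')u,\tfrac{1-k'}{1+k'}\Bigr)=(1+k')^{\xi+1}\,Dl(\xi,\xi,\nu,\nu;h;u,k).$$

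The main obstacle is bookkeeping rather than conceptual: I must verify carefully that the Landen substitution really carries the $\xi=\eta,\ \mu=\nu$ symmetric Darboux equation \emph{onto} the two-term ($\eta=\mu=0$) Darboux equation with the stated $\kappa^*$ and $h^*$ — i.e. that the coefficients of $\ns^2(w,\kappa^*)$ and $\sn^2(w,\kappa^*)$ come out as $\xi(\xi+1)$ and $\nu(\nu+1)\kappa^{*2}$ with no residual $\cd^2$ or $\dc^2$ terms left over — and that the constant absorbed into the independent variable produces precisely $(1+k')^{\xi+1}$ and no extra factors from $\cn$ or $\dn$ (which is why the identity for $\cn$, $\dn$ under Landen, giving $\cn(0,\kappa^*)=\dn(0,\kappa^*)=1$, matters). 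Once the two scalar identities above are confirmed and the singular point at $w=0$ is checked to remain regular with the same pair of exponents, the uniqueness of the Frobenius solution and the leading-term match finish the proof; no convergence issue arises because we only need the formal/analytic identification of Frobenius solutions near $w=0$, valid on their common disc of convergence.
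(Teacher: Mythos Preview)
Your proposal is correct and follows essentially the same route as the paper: the text preceding the theorem derives the transformation of the differential equation under Landen's substitution and then simply asserts the identity, leaving the normalisation constant implicit. Your additional step of matching the leading Frobenius behaviour at $u=0$ (using $\sn\sim u$, $\cn,\dn\to 1$) to pin down the factor $(1+k')^{\xi+1}$ is exactly the natural completion of that argument, and your reading of the hypothesis as an \emph{exclusion} of $\xi=-\tfrac32,-\tfrac52,\dots$ (so that the exponent-$\xi{+}1$ Frobenius solution is unique up to scalar) is the intended one.
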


For the case $\xi=\eta=\mu=\nu$, it follows from the duplication formula for Jacobi elliptic function
$${\ts\sn^2(u,k)}=\frac{1-\cn(2u,k)}{1+\dn(2u,k)}$$
that
$${\ts\ns^2(u,k)}+{\ts\dc^2(u,k)}+{\ts k^2\cd^2(u,k)}+k^2{\ts\sn^2(u,k)}=4{\ts\ns^2(2u,k)}.$$
So the Darboux equation
{\beq
&&\frac{{d}^{2}y}{{du}^{2}}+\Big(h-{\xi(\xi+1)}{\ts\ns^2(u,k)}-{\ts\xi(\xi+1)\dc^2(u,k)}\nonumber\\&&\ \hspace{2cm} -
{\ts\xi(\xi+1)k^2\cd^2(u,k)}-\xi(\xi+1)k^2{\ts\sn^2(u,k)}\Big)y=0
\eeq} becomes
\beqn\label{LameOther}\frac{{d}^{2}y}{{d\tilde u}^{2}}+\Big(h/4-\xi(\xi+1){\ts\ns^2(\tilde u,k)}\Big)y=0,\eeqn
where $\tilde u=2u.$

\begin{remark} Note that this form (\ref{LameOther}) of Lam\'e equation has been discussed in
Whittaker-Watson \cite[p. 555]{WW}. \end{remark}

Thus we obtain

\begin{theorem}[Duplication Formula] 
For $\xi=-\frac{3}{2},-\frac{5}{2},\cdots$, we have
$$Dl(\xi,0,0,0;h/4;2u,k)=2^{\xi+1}Dl(\xi,\xi,\xi,\xi;h;u,k).$$
\end{theorem}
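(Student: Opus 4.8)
The plan is to reduce the Duplication Formula to a direct comparison of the two defining series expansions~\eqref{E:series}, using the duplication identity for $\ns^2$ that immediately precedes the statement. First I would record the starting point: the Darboux equation with $\xi=\eta=\mu=\nu$ has potential $\xi(\xi+1)\bigl(\ns^2(u,k)+\dc^2(u,k)+k^2\cd^2(u,k)+k^2\sn^2(u,k)\bigr) = 4\,\xi(\xi+1)\,\ns^2(2u,k)$, so under $\tilde u = 2u$ it becomes the Lamé-type equation~\eqref{LameOther} with accessory parameter $h/4$. This is precisely the identity already derived in the text. Hence $Dl(\xi,0,0,0;h/4;2u,k)$, being the local solution at $\tilde u = 0$ with exponent $\xi+1$ of that transformed equation, and $Dl(\xi,\xi,\xi,\xi;h;u,k)$, the local solution at $u=0$ with exponent $\xi+1$ of the original equation, are both (up to a constant) the \emph{same} analytic solution of the same ODE near the singular point, because the map $u\mapsto 2u$ carries the singular point $u=0$ to $\tilde u = 0$ and is a local biholomorphism there.

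The key steps, in order, are: (1) verify that the regular singular point $u=0$ of the $(\xi,\xi,\xi,\xi)$-equation has exponents $\xi+1$ and $-\xi$, and that after the substitution $\tilde u = 2u$ the point $\tilde u = 0$ of~\eqref{LameOther} has the same pair of exponents (the exponents are unchanged since $\tilde u = 2u$ is linear, so the indicial equation is unchanged); (2) conclude that both solutions are, up to a multiplicative constant, \emph{the unique} (since $\xi \neq -\tfrac32,-\tfrac52,\dots$ guarantees non-logarithmic behaviour and a non-resonant exponent difference) Frobenius solution with leading exponent $\xi+1$; (3) pin down the constant by comparing leading terms. For~(3) I would use the normalization built into the definition: $Dl$ begins $\sn(u,k)^{\xi+1}\cn^{\eta+1}\dn^{\mu+1}(1+O(\sn^2))$. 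For the left-hand side, $Dl(\xi,0,0,0;h/4;2u,k) = \sn(2u,k)^{\xi+1}\cn(2u,k)\dn(2u,k)\bigl(1+\cdots\bigr)$; near $u=0$, $\sn(2u,k)\sim 2u$, while $\sn(u,k)\sim u$, so the leading term is $(2u)^{\xi+1}(1+\cdots)$. For the right-hand side, $Dl(\xi,\xi,\xi,\xi;h;u,k) = \sn(u,k)^{\xi+1}\cn(u,k)^{\xi+1}\dn(u,k)^{\xi+1}(1+\cdots)\sim u^{\xi+1}(1+\cdots)$. Matching gives the factor $2^{\xi+1}$, exactly as claimed.

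The main obstacle — really the only nontrivial point — is step~(2): one must be sure that the hypothesis $\xi \neq -\tfrac32,-\tfrac52,\dots$ is exactly what is needed to guarantee that the Frobenius solution with exponent $\xi+1$ exists, is unique up to scalar, and is free of logarithms, so that ``two solutions with the same leading behaviour'' forces proportionality. This is the standard Fuchs theory dichotomy: the exponent difference at $u=0$ is $2\xi+1$, and a logarithm or a breakdown of the recursion~\eqref{3term} can only occur when $2\xi+1 \in \Z_{>0}$ with the obstruction actually realized, i.e.\ when $\xi \in \{-\tfrac32,-\tfrac52,\dots\}$ (the excluded set) — note $\xi \in \{-1,-\tfrac12,0,\dots\}$ does not cause trouble for the exponent $\xi+1$ branch since then $2\xi+1 \le 1$ and $M_m(\xi) = (2m+2)(2m+2\xi+3)$ never vanishes for $m \ge 0$. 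Once uniqueness is secured, the rest is the bookkeeping of leading coefficients above, and the identity is established. An entirely parallel argument — comparing series and fixing the constant by the $\sn(u,k)\sim u$ versus $\sn((1+k')u,\kappa^*)\sim(1+k')u$ leading terms — proves the Landen Transformation Formula, using the two identities for $(1+k')^2\kappa^{*2}\sn^2(w,\kappa^*)$ and $(1+k')^2\kappa^{*2}\ns^2(w,\kappa^*)$ derived just above it.
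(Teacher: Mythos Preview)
Your proposal is correct and follows essentially the same approach as the paper: the paper derives the identity $\ns^2(u,k)+\dc^2(u,k)+k^2\cd^2(u,k)+k^2\sn^2(u,k)=4\ns^2(2u,k)$, observes that the $(\xi,\xi,\xi,\xi)$-Darboux equation becomes~\eqref{LameOther} under $\tilde u=2u$, and then simply states the theorem with ``Thus we obtain''. You have supplied the two details the paper leaves implicit---the Fuchs-theory uniqueness of the exponent-$(\xi+1)$ Frobenius branch under the hypothesis $\xi\neq-\tfrac32,-\tfrac52,\dots$, and the leading-term comparison $\sn(2u,k)^{\xi+1}\sim(2u)^{\xi+1}$ versus $\sn(u,k)^{\xi+1}\cn(u,k)^{\xi+1}\dn(u,k)^{\xi+1}\sim u^{\xi+1}$ that fixes the constant $2^{\xi+1}$---so your argument is a fleshed-out version of the paper's own.
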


\appendix

\section{Results on three-term recursion relations}\label{S:three term}

In this section, we review some useful results about three-term recursion relations. We refer to the readers to Gautschi \cite{Gautschi} for more details.

Given the three-term recursion
\[R_rC_{r+1}+S_rC_r+P_rC_{r-1}=0\, (r=0,1,2,\cdots),\]
where $C_{-1}=0$ and $R_r\neq0$ for all $r=0,1,2,\cdots$.
Assume that $\lim_{r\to\infty} P_r:=P,$  $\lim_{r\to\infty} S_r:=S$ and $\lim_{r\to\infty} R_r:=R$ exist.
The limit of ${C_{r+1}}/{C_r}$ ($r\to\infty$) can be determined by Poincar\'e's Theorem and Perron's Theorem.

\begin{theorem}[Poincar\'e's Theorem (see \cite{Poincare} or {\cite[p.527]{MT}})]\label{poin}
The limit
\[\lim_{r\to\infty} \frac{C_{r+1}}{C_r}= t_1\ \mbox{ or }\ t_2,\]
where $t_1$ and $t_2$ are the roots of the quadratic equation $Rt^2+St+P$.
\end{theorem}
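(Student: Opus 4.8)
\smallskip
\noindent\textbf{Proof approach.}
This is the classical theorem of Poincar\'e, and the plan is to reproduce its standard proof in the second-order case under the hypothesis that the two roots have distinct moduli, $|t_1|\neq|t_2|$ --- the only situation in which the theorem is invoked in this paper (e.g. for the recursion \eqref{E:recursion} one has $t_1=1$ and $t_2=k^2$ with $|k|\neq1$). First I would normalise: since $R_r\neq0$ and $R_r\to R\neq0$, divide the recursion by $R_r$ to assume $R_r\equiv1$, and rewrite it as the first-order vector recursion $v_{r+1}=M_rv_r$ with $v_r=(C_r,\,C_{r-1})^{\top}$ and
\[
M_r=\begin{pmatrix}-S_r/R_r & -P_r/R_r\\ 1 & 0\end{pmatrix}\ \longrightarrow\ M=\begin{pmatrix}-S/R & -P/R\\ 1 & 0\end{pmatrix},
\]
whose limit $M$ has characteristic polynomial $t^2+(S/R)t+(P/R)$, eigenvalues $t_1,\,t_2$ and eigenvectors $(t_i,\,1)^{\top}$. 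Diagonalising by $T=\begin{pmatrix}t_1 & t_2\\ 1 & 1\end{pmatrix}$ and putting $w_r=T^{-1}v_r=(x_r,\,y_r)^{\top}$ turns the recursion into the perturbed-diagonal form $w_{r+1}=(\Lambda+B_r)w_r$ with $\Lambda=\mathrm{diag}(t_1,t_2)$ and $B_r\to0$.

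\smallskip
The core is the analysis of the scalar ratio $s_r:=y_r/x_r$. Taking WLOG $|t_1|>|t_2|$, dividing the two component recursions gives $s_{r+1}=g_r(s_r)$ with $g_r(s)=\dfrac{(t_2+o(1))s+o(1)}{(t_1+o(1))+o(1)\,s}\longrightarrow\dfrac{t_2}{t_1}\,s$, so the limiting M\"obius map $s\mapsto(t_2/t_1)s$ has an \emph{attracting} fixed point at $s=0$ (multiplier of modulus $|t_2/t_1|<1$) and a \emph{repelling} one at $s=\infty$. I would then establish two facts. \emph{(i)} There is a one-dimensional space of ``recessive'' solutions, spanned by some $\psi$, along which $s_r\to\infty$ and $\psi_{r+1}/\psi_r\to t_2$: the substitution $\psi_r=t_2^{\,r}z_r$ transforms the recursion into one whose limiting characteristic roots are $1$ and $t_1/t_2$ (of modulus $>1$), and the bounded solution $z_r\to\mathrm{const}$ is produced by a contraction-mapping (successive-approximation) argument for the tail sequence in a space of sequences tending to a constant, the contraction being driven by $|t_2/t_1|<1$. \emph{(ii)} Every solution \emph{not} proportional to $\psi$ has $s_r\to0$, hence $x_{r+1}/x_r\to t_1$ and $C_{r+1}/C_r\to t_1$: once $|s_r|$ enters a fixed neighbourhood of $0$ it is trapped there and driven to $0$ by the eventual contraction of $g_r$; and if the orbit $s_r$ never enters that neighbourhood then $\sigma_r:=1/s_r$ stays bounded, but a bounded orbit of the expanding-with-vanishing-noise map near the repeller $\sigma=0$ must converge to $\sigma=0$, i.e. $s_r\to\infty$, so such a solution would be recessive --- a contradiction.

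\smallskip
Granting \emph{(i)} and \emph{(ii)} the conclusion is immediate: any solution writes as $C=\alpha\phi+\beta\psi$ with $\phi$ a fixed complementary (hence non-recessive) solution, and since $|\psi_r|^{1/r}\to|t_2|$ and $|\phi_r|^{1/r}\to|t_1|$ with $|t_2|<|t_1|$ we get $|\psi_r|=o(|\phi_r|)$, so $C_{r+1}/C_r\to\phi_{r+1}/\phi_r\to t_1$ when $\alpha\neq0$, while $C_{r+1}/C_r=\psi_{r+1}/\psi_r\to t_2$ when $\alpha=0$. The step I expect to be the main obstacle is the pair \emph{(i)}--\emph{(ii)}: carrying out the contraction-mapping construction of the recessive solution on the right function space, and ruling out a ``wandering'' ratio $s_r$ in \emph{(ii)} --- both of which rest squarely on the modulus gap $|t_1|\neq|t_2|$ and are not mere computations. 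A minor bookkeeping point en route is that $x_r$, hence $s_r$, may vanish at isolated indices; but a nonzero solution of a second-order recursion with $P_r,R_r\neq0$ cannot vanish at two consecutive indices, so this is harmless. (The borderline cases $R=0$ or $P=0$, where the ``quadratic'' degenerates, do not occur in this paper and would be treated by a reduction of order or a change of normalisation.)
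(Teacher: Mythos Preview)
The paper does not prove this statement at all: Theorem~\ref{poin} sits in Appendix~\ref{S:three term} as a quoted classical result, with references to Poincar\'e's 1885 paper and to Milne-Thomson, and it is used only as a black box in the proof of Theorem~\ref{convergent}. So there is no ``paper's own proof'' to compare against --- you have supplied what the authors deliberately delegate to the literature.

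That said, your sketch is the standard modern route to the second-order Poincar\'e--Perron theorem under the hypothesis $|t_1|\neq|t_2|$ (companion-matrix form, diagonalisation of the limit matrix, construction of a recessive solution by a contraction/successive-approximation argument, and the dominant/recessive dichotomy). You are right that this is exactly the regime the paper needs: in Theorem~\ref{convergent} the limiting roots are $1$ and $k^2$ with $|k|\neq1$. One small caution on your step~(ii): the sentence ``a bounded orbit of the expanding-with-vanishing-noise map near the repeller $\sigma=0$ must converge to $\sigma=0$'' is the point that needs the most care, since a repeller does not attract generic bounded orbits; the honest argument is rather that any orbit of $s_r$ which stays outside a fixed small disc about $0$ forces the pair $(x_r,y_r)$ to align with the $t_2$-eigendirection (e.g.\ via the Casoratian, or by iterating the contraction for $\sigma_r$ backwards), hence the solution is recessive. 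You already flag (i)--(ii) as the crux, which is accurate.
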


\begin{theorem}[Perron's Theorem (see {\cite[\S 57]{Perron}})]\label{perron}
Suppose that $|t_1|<|t_2|.$ If
the infinite continued fraction
\[S_0/R_0-\frac{P_1/R_1}{S_1/R_1-}\frac{P_2/R_2}{S_2/R_2-}\cdots=0\] holds, then
\[\lim_{r\to\infty} \Bigg|\frac{C_{r+1}}{C_r}\Bigg|= t_1.\]
Otherwise, \[\lim_{r\to\infty} \Bigg|\frac{C_{r+1}}{C_r}\Bigg|= t_2.\]
\end{theorem}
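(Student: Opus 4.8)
The plan is to deduce Perron's refinement of Poincar\'e's theorem (Theorem \ref{poin}) from the general theory of \emph{recessive} (minimal) and \emph{dominant} solutions of a second-order linear difference equation, combined with Pincherle's theorem, which identifies the stated continued fraction with the ratio of consecutive terms of the recessive solution. First I would normalise the recursion by dividing through by $R_r$ (legitimate since $R_r\neq0$), putting it in the monic form $C_{r+1}+b_rC_r+a_rC_{r-1}=0$ with $b_r=S_r/R_r\to S/R$ and $a_r=P_r/R_r\to P/R$; then $t_1,t_2$ are the roots of $t^2+(S/R)t+(P/R)$, equivalently of $Rt^2+St+P$, and the hypothesis reads $|t_1|<|t_2|$.

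Step two is the dichotomy. By the Poincar\'e--Perron theory of difference equations with convergent coefficients (see Gautschi \cite{Gautschi}), when the limiting characteristic roots have distinct moduli the solution space is spanned by a recessive solution $\{g_r\}$, unique up to a scalar, with $g_{r+1}/g_r\to t_1$, together with any dominant solution $\{G_r\}$ with $G_{r+1}/G_r\to t_2$; moreover every solution not proportional to $\{g_r\}$ has ratio tending to $t_2$. Thus for our sequence $\{C_r\}$, fixed by $C_{-1}=0$ and $C_0=1$, there are exactly two possibilities: either $\{C_r\}$ is a scalar multiple of the recessive solution, in which case $|C_{r+1}/C_r|\to|t_1|$, or it is dominant, in which case $|C_{r+1}/C_r|\to|t_2|$. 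It therefore remains only to decide, in terms of the continued fraction, into which class $\{C_r\}$ falls.

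Step three makes this decision via Pincherle's theorem. Since $|t_1|<|t_2|$ guarantees the existence of a recessive solution, Pincherle's theorem asserts that the tail continued fraction $\tfrac{P_1/R_1}{S_1/R_1-}\tfrac{P_2/R_2}{S_2/R_2-}\cdots$ converges, and that its value equals $-g_1/g_0$ for the recessive solution $\{g_r\}$. Hence the left-hand side of the hypothesis is $S_0/R_0+g_1/g_0$, which vanishes precisely when $g_1=-(S_0/R_0)g_0$. Reading the $r=0$ instance of the recursion as $R_0g_1+S_0g_0+P_0g_{-1}=0$ shows, using $P_0\neq0$, that this is equivalent to $g_{-1}=0$. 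But $g_{-1}=0$ is exactly the condition that the recessive solution, scaled to $g_0=1$, coincides with our sequence $\{C_r\}$, since a solution of the recursion over $r\geq-1$ is determined by the pair $(C_{-1},C_0)$ and $\{C_r\}$ has $C_{-1}=0$. Therefore the continued fraction vanishes if and only if $\{C_r\}$ is recessive; combining with step two gives $\lim|C_{r+1}/C_r|=|t_1|$ when it holds and $|t_2|$ otherwise, which is the assertion.

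I expect the main obstacle to be the rigorous proof of Pincherle's correspondence with the precise sign and indexing conventions of the stated continued fraction, together with establishing the one-dimensionality of the recessive subspace directly from the mere convergence of $a_r,b_r$ rather than from constant coefficients. Concretely, the delicate point is to show that the finite approximants of the continued fraction are exactly the ratios attached to the solutions killed at a finite index, and that these converge to $-g_1/g_0$; controlling this convergence, and handling the borderline bookkeeping when $P_0=0$ or when $|t_1|=|t_2|$ (the latter excluded by hypothesis), is the part requiring care. All of this is classical and is carried out in Perron \cite{Perron}; the purpose of recording the argument here is to make transparent how the recessive/dominant dichotomy translates into the continued-fraction criterion invoked in Theorems \ref{terminate} and \ref{convergent}.
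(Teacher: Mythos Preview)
The paper does not supply its own proof of this theorem: it is quoted in Appendix~\ref{S:three term} as a classical result with a citation to Perron~\cite{Perron} (and Gautschi~\cite{Gautschi} for further detail), and is then invoked as a black box in the proof of Theorem~\ref{convergent}. So there is no in-paper argument to compare against.

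Your sketch is the standard route and is correct in outline: normalise to monic form, invoke the Poincar\'e--Perron dichotomy to split solutions into a one-dimensional recessive family with ratio $\to t_1$ and a dominant complement with ratio $\to t_2$, and then use Pincherle's theorem to identify the vanishing of the stated continued fraction with the condition that the particular solution determined by $C_{-1}=0$, $C_0=1$ lies in the recessive family. One small point of care in your Step~3: the inference from $g_1=-(S_0/R_0)g_0$ to $g_{-1}=0$ via the $r=0$ recursion genuinely requires $P_0\neq0$, which is not among the stated hypotheses; you flag this yourself, and in the application (Theorem~\ref{convergent}) the relevant coefficient $K_1$ is generically nonzero, so the gap is harmless there, but a fully self-contained statement would need either that hypothesis or a separate treatment of the degenerate case. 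Otherwise your plan is exactly how the classical proof goes.
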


\section{Asymmetric form of Sparre equation}\label{S:Sparre}

Sparre (\cite{Sparre}) in 1883 discovered the following related equation
\begin{eqnarray}
\label{E:sparre}
&&\frac{{d}^{2}\tilde{y}}{{du}^{2}}+
\Big[2\ell\frac{k^2\sn(u,k)\cn(u,k)}{\dn(u,k)}+2\ell_1\frac{\sn(u,k)\dn(u,k)}{\cn(u,k)}-2\ell_2\frac{\cn(u,k)\dn(u,k)}{\sn(u,k)}\Big]\frac{{d}\tilde{y}}{{du}}
\nonumber\\&&\ +
\Big[\tilde{h}-{(\xi-\ell_2)(\xi+\ell_2+1)}\,{\ts\ns^2(u,k)}-{\ts(\eta-\ell_1)(\eta+\ell_1+1)\dc^2(u,k)}\nonumber\\&&\
{-\ts(\mu-\ell)(\mu+\ell+1)k^2\cd^2(u,k)}-(\nu+\ell+\ell_1+\ell_2)(\nu-\ell-\ell_1-\ell_2+1)k^2{\ts\sn^2(u,k)}\Big]\,\tilde{y}=0,\nonumber\\
\end{eqnarray} where the Riemann $P$-scheme
\[P\begin{Bmatrix}
\ 0\ &\ K(k)\ &\ K(k)+iK'(k)\ &\ iK'(k)\ &\\
\xi+1+\ell_2&\eta+1+\ell_1&\mu+1+\ell&\nu+1-\ell-\ell_1-\ell_2&u;\, \tilde{h}\\
-\xi+\ell_2&-\eta+\ell_1&-\mu+\ell&-\nu-\ell-\ell_1-\ell_2&
\end{Bmatrix}.\]
The Sparre equation (\ref{E:sparre}) can be normalized into the following two versions:
\eb
\item (Symmetric form) the Darboux equation (\ref{E:darboux}) by using the transformation $\tilde{y}=(\sn u)^{\ell_2}(\cn u)^{\ell_1}(\dn u)^{\ell}y$ (see \cite[\S 3]{MS});
\item (Asymmetric form)
\ee
\begin{eqnarray}
\label{E:asym}
&&\frac{{d}^{2}y}{{du}^{2}}+
\Big[2\mu\frac{k^2\sn(u,k)\cn(u,k)}{\dn(u,k)}+2\eta\frac{\sn(u,k)\dn(u,k)}{\cn(u,k)}-2\xi\frac{\cn(u,k)\dn(u,k)}{\sn(u,k)}\Big]\frac{{d}y}{{du}}
\nonumber\\&&\ +
\Big[h-(\nu+\xi+\eta+\mu)(\nu-\xi-\eta-\mu+1)k^2{\ts\sn^2(u,k)}\Big]\,y=0,\nonumber\\
\end{eqnarray} where the Riemann $P$-scheme
\[P\begin{Bmatrix}
\ 0\ &\ K(k)\ &\ K(k)+iK'(k)\ &\ iK'(k)\ &\\
0&0&0&\nu-\xi-\eta-\mu+1&u;\, h\\
2\xi+1&2\eta+1&2\mu+1&-\nu-\xi-\eta-\mu&
\end{Bmatrix},\]
by using the transformation $\tilde{y}=(\sn u)^{\ell_2-\xi}(\cn u)^{\ell_1-\eta}(\dn u)^{\ell-\mu}y$.

The automorphism group of the Sparre equation in asymmetric form is isomorphic to the Coxeter group $D_4\cong(\mathbb{Z}_2)^3\rtimes S_4$ which
is induced by the tranformations $w=\tau_{X_i}(u,k)$ (change of the independent variable from $u$ to $w$) by the map $X_i$ in Table \ref{T1}, followed by
the tranformations $y(w)=(\sn w)^{n_1}(\cn w)^{n_2}(\dn w)^{n_3}z(w)$ (change of the dependent variable from $y(w)$ to $z(w)$) for two suitable
values of $n_1, n_2, n_3$ such that the Sparre equation preserves the asymmetric form. (Here the transformation interchanging the exponents at $iK'(k)$ is excluded.)

\subsection{Special cases for the asymmetric form}
\subsubsection{Picard equation \cite{Picard}:(\textbf{Eqn}(\ref{E:asym}) for $\mu=-\nu=n/2,\ \eta=\xi=0,\ \alpha=h$)}
The Picard equation
\beqn\label{Picard}\frac{{d}^{2}y}{{du}^{2}}+ nk^2\frac{\ts\sn (u,k)\ts\cn (u,k)}{\ts\dn (u,k)}	 \frac{{d}y}{{du}} +\alpha y=0 \eeqn
has the Riemann $P$-scheme
\[
	P_{\mathbb{C}\slash\Lambda}\begin{Bmatrix}
K(k)+iK'(k)&iK'(k)&\\
0&0&u;\, \alpha\\
1+n&1-n&
\end{Bmatrix}.
\]
It follows from
Table \ref{T1} that there exist only four transformations $X_i=I_0, I_1, A_0,A_1$ such that the Picard equation
preserves the form, that is, under such four transformations $X_i$, the equation (\ref{Picard}) becomes
\beqn\label{Picard1}\frac{{d}^{2}y}{{dw}^{2}}+n_{X_i}\kappa_X^2\frac{\ts\sn(w,\kappa_X)\ts\cn (w,\kappa_X)}{\ts\dn (w,\kappa_X)}	
\frac{{d}y}{{dw}} +\alpha_X y=0,\eeqn
where $w=\tau_{X_i}(u,k)$, $\kappa_X$ are given in Table \ref{T1} and $n_{X_i}(n),\alpha_X(\alpha,k)$ are given in Table \ref{T3}.
These four transformations induce the automorphism group of the Picard equation isomorphic to $\Z_2\times\Z_2$.
\begin{remark}
The tranformations $y(w)=(\sn w)^{n_1}(\cn w)^{n_2}(\dn w)^{n_3}z(w)$ for any values $n_1, n_2, n_3$ do not preserve the form
of Picard equation.
\end{remark}

\begin{table}
\begin{center}
\caption{}\label{T3}
\begin{tabular}{c||c|c}
  ${X_i}$ & ${n_{X_i}(n)}$ & $\alpha_X(\alpha,k)$ \\
  \hline
  $I_0$ & $n$ & $\alpha$ \\
  $I_1$ & $-n$ & \\
  \hline
  $A_0$ & $-n$ & $\alpha/(k')^{2}$\\
  $A_1$ & $n$ & \\
  \end{tabular}
\end{center}
\end{table}

\subsubsection{Hermite equation \cite{Hermite1}:(\textbf{Eqn}(\ref{E:asym}) for $\mu=\lambda+1,\ \nu=n,\ \eta=\xi=0$)}
The Hermite equation
{\begin{eqnarray}\label{Hermite}
&&\frac{{d}^{2}y}{{du}^{2}}+ 2(\lambda+1)k^2\frac{\ts\sn (u,k)\ts\cn (u,k)}{\ts\dn (u,k)}\frac{{d}y}{{du}}\nonumber\\
&&\hspace{2cm} -\big(h+(n-\lambda)(n+\lambda+1)k^2{\ts\sn^2(u,k)}\big)y=0 \end{eqnarray}}
has the Riemann $P$-scheme
\[P_{\mathbb{C}\slash\Lambda}\begin{Bmatrix}
K(k)+iK'(k)&iK'(k)&\\
0&n-\lambda&u;\, h\\
2\lambda+3&-n-\lambda-1&
\end{Bmatrix}.\]

The automorphism group of the Hermite equation has three generators induced by the following transformations (preserving the form of the equation):
\eb
\item $w=\tau_{A_1}(u,k)$ (change of the independent variable from $u$ to $w$) by the map $A_1$ in Table \ref{T1};
\item $y(u)=(\dn u)^{2\lambda+3}z(u)$ (change of the dependent variable from $y(u)$ to $z(u)$);
\item $w=\tau_{A_0}(u,k)$ (change of the independent variable from $u$ to $w$) by the map~$A_0$ given in Table \ref{T1}, followed by
      $y(w)=(\dn w)^{-n-\lambda-1}z(w)$ (change of the dependent variable from $y(w)$ to $z(w)$).
\ee
Since each generator has order two, the automorphism group is isomorphic to~$(\Z_2)^3$.

\begin{remark}
Both Picard and Hermite equations can be normalized into the form of the associated Lam\'e equation (\ref{AL}) by a suitable transformation.
\end{remark}

\section{Elliptic functions}\label{S:conversion}
Let $\wp(z)=\wp(z|\omega,\omega')$ be the Weierstrass elliptic function of periods $2\omega,2\omega'$ with a double pole at $z=0,$ where
$\Im(\omega'/\omega)>0.$ Let
$$\omega_1=\omega,\ \omega_2=-\omega-\omega',\ \omega_3=\omega',$$
$$e_i=\wp(\omega_i),\ i=1,2,3.$$
Now for the Jacobi elliptic function,
the modulus $k$ and the complementary modulus $k'$ can be expressed in terms of $e_1,e_2,e_3$:
\beq\label{modulus} k=\frac{(e_2-e_3)^{1/2}}{(e_1-e_3)^{1/2}},\ {k'}=\frac{(e_1-e_2)^{1/2}}{(e_1-e_3)^{1/2}}.\eeq
Let \beq\label{variable} u=(e_1-e_3)^{1/2}z.\eeq Then
{\beq\label{jacobi}\sn(u,k)=\frac{(e_1-e_3)^{1/2}}{[\wp(z)-e_3]^{1/2}},\ \cn(u,k)=\frac{[\wp(z)-e_1]^{1/2}}{[\wp(z)-e_3]^{1/2}},\ \dn(u,k)=\frac{[\wp(z)-e_2]^{1/2}}{[\wp(z)-e_3]^{1/2}}.\eeq}
and the other nine Jacobi elliptic functions are defined to be the reciprocals of these three functions and the quotients of any two of them. Also,
the quarter periods
\beq \label{period} K(k)=(e_1-e_3)^{1/2}\omega,\ iK'(k)=(e_1-e_3)^{1/2}\omega'.\eeq

\section{Semi-direct products}\label{SD}

\begin{definition}\label{semidirect}
Given any two groups $N$ and $H$ 
and a group homomorphism $\Gamma : H \ra Aut(N)$, we can construct a new group $N\rtimes_{\Gamma}H$, called the (outer) semidirect
product of $N$ and $H$ with respect to $\Gamma$, defined as follows:
\eb
\item As a set, $N\rtimes_{\Gamma}H$ is the cartesian product $N \times H$;
\item Multiplication of elements in $N\rtimes_{\Gamma}H$ is determined by the homomorphism $\Gamma$. The operation is
$$*: (N\rtimes_{\Gamma} H)\times(N\rtimes_{\Gamma} H)\to N\rtimes_{\Gamma} H$$
defined by $$(n_1, h_1)*(n_2, h_2) = (n_1\Gamma_{h_1}(n_2), h_1h_2)$$
for $n_1, n_2 \in N$ and $h_1, h_2\in H$.
\ee
\end{definition}

The idea of semi-direct product can be expressed in the language of exact sequence also, which is the next definition.

\begin{definition}
Let $L$, $M$, $N$ be groups. A short exact sequence is the collection of group homomorphisms
$$
0\to L\to M\to N\to 0
$$
such that the image of each of these homomorphisms equals to the kernel of the next homomorphism.
\end{definition}

\begin{definition}
Two exact sequences
$$
\begin{array}{c}
0\to L\to M\to N\to 0\\
0\to L\to M'\to N\to 0
\end{array}
$$
are equivalent if there exists a homomorphism $M\to M'$ such that
$$
\begin{array}{ccccc}
0\to&L&\to M\to&N&\to 0\\
 &||&\downarrow&||& \\
0\to&L&\to M'\to&N&\to 0
\end{array}
$$
is a commutative diagram. It can be shown that equivalence of exact sequences is an equivalence relation.
\end{definition}

\begin{definition}
Given an exact sequence
$$
0\to L\to M\stackrel{\pi}{\to} N\to 0,
$$
a choice of splitting (or a section) is a homomorphism $N\stackrel{\iota}{\to}M$ such that
$\pi\circ\iota$ is the identity map of $N$.
\end{definition}

\begin{example}
Fix $n\in\mathbb{N}$ and let $R$ be the group of rigid motions in $\mathbb{R}^n$ (that is, the group
of continuous maps $\mathbb{R}^n\to\mathbb{R}^n$ which preserve the Euclidean norm). Then there is
an exact sequence
$$
0\to\mathbb{R}^n\to R\to O(n)\to 0.
$$
This is just another way of saying that $R$ is a semi-direct product of $\mathbb{R}^n$ (identified
as the group of translations) and $O(n)$. A choice of section is easy also, which is simply the
inclusion $O(n)\to R$.
\end{example}

The languages of exact sequence (with splitting) and semi-direct product are equivalent, in the sense of the following identification.

\begin{theorem}[see {\cite[p.65-68]{Bourbaki}}]
Given two groups $N$ and $H$. There is a one-to-one correspondence between the isomorphism classes of
semi-direct products $N\rtimes H$ and the equivalence classes of exact sequences
$0\to N\to...\to H\to 0$ with a section.
\end{theorem}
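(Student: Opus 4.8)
The plan is to produce explicit passages in both directions and verify that they are mutually inverse, after which the statement follows from a bookkeeping check on the two equivalence relations. First I would start from a homomorphism $\Gamma\colon H\to\mathrm{Aut}(N)$, form $M:=N\rtimes_\Gamma H$ as in Definition~\ref{semidirect}, and equip it with the inclusion $i\colon N\to M$, $n\mapsto(n,e_H)$, the projection $\pi\colon M\to H$, $(n,h)\mapsto h$, and the section $\iota\colon H\to M$, $h\mapsto(e_N,h)$. Using the multiplication rule $(n_1,h_1)\ast(n_2,h_2)=(n_1\Gamma_{h_1}(n_2),h_1h_2)$ one checks directly that $i$ and $\pi$ are homomorphisms, that $i$ is injective with image $N\times\{e_H\}$, that $\pi$ is surjective with $\ker\pi=i(N)$, and that $\pi\circ\iota=\mathrm{id}_H$; hence $0\to N\stackrel{i}{\to}M\stackrel{\pi}{\to}H\to 0$ is exact with section $\iota$.

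Conversely, given an exact sequence $0\to N\stackrel{i}{\to}M\stackrel{\pi}{\to}H\to 0$ with a section $\iota$, I would show that $M$ is the internal semi-direct product of $i(N)$ by $\iota(H)$: the subgroup $i(N)=\ker\pi$ is normal in $M$; $i(N)\cap\iota(H)=\{e_M\}$, since $i(n)=\iota(h)$ forces $h=\pi\iota(h)=\pi i(n)=e_H$; and $M=i(N)\,\iota(H)$, since $m\,\iota(\pi(m))^{-1}\in\ker\pi=i(N)$ for every $m\in M$. Then $\Gamma_h(n):=i^{-1}\big(\iota(h)\,i(n)\,\iota(h)^{-1}\big)$ defines a homomorphism $\Gamma\colon H\to\mathrm{Aut}(N)$, well defined because $i(N)$ is normal and $i$ is injective, and $(n,h)\mapsto i(n)\iota(h)$ is an isomorphism $N\rtimes_\Gamma H\to M$ carrying the inclusion of $N$ to $i$ and intertwining the projections onto $H$.

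Next I would verify that the two passages cancel: applying the first construction to $\Gamma$ and then the second recovers $\Gamma$ by a one-line computation with the multiplication rule, while applying the second to a split sequence and then the first returns a sequence equivalent to the original via $(n,h)\mapsto i(n)\iota(h)$. Finally I would check that an isomorphism between total groups which respects the inclusions of $N$ and the projections onto $H$ induces an equivalence of the associated split sequences, and conversely, so that the two constructions really induce a bijection at the level of classes rather than merely a surjection in each direction.

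I expect the genuine difficulty to lie not in the constructions---which are routine---but in pinning down the two equivalence relations precisely enough for the correspondence to be a bijection. In particular, ``equivalence of split exact sequences'' must be read so as to keep track of the section, since replacing $\iota$ by another section alters $\Gamma$ by an inner automorphism of $N$, and an ``isomorphism class of semi-direct product $N\rtimes H$'' must be understood as the class of the triple $N\hookrightarrow G\twoheadrightarrow H$ rather than of the bare group $G$. Once this bookkeeping is in place the counts on the two sides agree, which is the content of \cite[pp.~65--68]{Bourbaki}.
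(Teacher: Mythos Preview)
Your proposal is correct and follows the standard argument; the paper itself does not prove this theorem but merely cites Bourbaki \cite[pp.~65--68]{Bourbaki}, so there is no in-paper proof to compare against. Your care about the equivalence relations (tracking the section and treating a semi-direct product as the datum $N\hookrightarrow G\twoheadrightarrow H$ rather than the bare group) is exactly the point one needs to make the bijection precise, and is consistent with how Bourbaki handles it.
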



\begin{thebibliography}{100}

\bibitem{ahlfors}
Ahlfors, L.V.: Complex Analysis.
 \newblock McGraw-Hill Co. (1979)

\bibitem{AR}
Arscott, F.M., Reid, G.D.L.: Some linear transformations of {L}am\'e functions
  and ellipsoidal wave functions.
\newblock Bul. Inst. Politehn. Ia\c si (N.S.) \textbf{17(21)}(1-2, part 1),
  79--86 (1971)

\bibitem{BD}
Baldassarri, F., Dwork, B.: On second order linear differential equations with
  algebraic solutions.
\newblock Amer. J. Math. \textbf{101}(1), 42--76 (1979)

\bibitem{BS}
Borcea, J., Shapiro, B.: Root asymptotics of spectral polynomials for the
  {L}am\'e operator.
\newblock Comm. Math. Phys. \textbf{282}(2), 323--337 (2008)

\bibitem{Bourbaki}
Bourbaki, N.: Algebra {I}. {C}hapters 1--3.
\newblock Elements of Mathematics (Berlin). Springer-Verlag, Berlin (1998)

\bibitem{BF}
Byrd, P.F., Friedman, M.D.: Handbook of elliptic integrals for engineers and
  scientists.
\newblock Die Grundlehren der mathematischen Wissenschaften, Band 67.
  Springer-Verlag, New York-Heidelberg (1971).
\newblock Second edition, revised

\bibitem{CLW}
Chai, C.L., Lin, C.S., Wang, C.L.: Mean field equations, hyperelliptic curves
  and modular forms: {I}.
\newblock Camb. J. Math. \textbf{3}(1-2), 127--274 (2015)

\bibitem{CH}
Courant, R., Hilbert, D.: Methods of mathematical physics. {V}ol. {I}.
\newblock Interscience Publishers, Inc., New York, N.Y. (1953)

\bibitem{Darboux}
Darboux, G.: Sur une \'equation lin\'eaire.
\newblock T. XCIV, Comptes Rendus de l'Academie des Sciences pp. 1645--1648
  (1882)

\bibitem{Dub}
Dubrovin, B.A.: Periodic problems for the {K}orteweg-de {V}ries equation in the
  class of finite band potentials.
\newblock Functional Anal. Appl. \textbf{9}, 215--223 (1975)

\bibitem{DL}
Duval, A., Loday-Richaud, M.: Kova\v ci\v c's algorithm and its application to
  some families of special functions.
\newblock Appl. Algebra Engrg. Comm. Comput. \textbf{3}(3), 211--246 (1992)

\bibitem{Erdelyi3}
Erd{\'e}lyi, A.: On {L}am\'e functions.
\newblock Philos. Mag. (7) \textbf{31}, 123--130 (1941)

\bibitem{EAM2}
Erd{\'e}lyi, A., Magnus, W., Oberhettinger, F., Tricomi, F.G.: Higher
  {T}ranscendental {F}unctions. {V}ol. {II}.
\newblock Robert E. Krieger Publishing Co. Inc., Melbourne, Fla. (1981)

\bibitem{Gautschi}
Gautschi, W.: Computational aspects of three-term recurrence relations.
\newblock SIAM Rev. \textbf{9}, 24--82 (1967)

\bibitem{GW1}
Gesztesy, F., Weikard, R.: Lam\'e potentials and the stationary (m){K}d{V}
  hierarchy.
\newblock Math. Nachr. \textbf{176}, 73--91 (1995)

\bibitem{GW2}
Gesztesy, F., Weikard, R.: Picard potentials and {H}ill's equation on a torus.
\newblock Acta Math. \textbf{176}(1), 73--107 (1996)

\bibitem{GV}
Grosset, M.P., Veselov, A.P.: Elliptic {F}aulhaber polynomials and {L}am\'e
  densities of states.
\newblock Int. Math. Res. Not. pp. Art. ID 62,120, 31 (2006)

\bibitem{GV1}
Grosset, M.P., Veselov, A.P.: Lam\'e equation, quantum {E}uler top and elliptic
  {B}ernoulli polynomials.
\newblock Proc. Edinb. Math. Soc. (2) \textbf{51}(3), 635--650 (2008)

\bibitem{HM}
Harris, J., Morrison, I.: Moduli of Curves.
\newblock Springer Verlag, GTM (1991)

\bibitem{Heine}
Heine, E.: Auszug eines {S}chreibens uber die {L}ame\'schen {F}unctionen an den
  {H}erausgeber.
\newblock J. reine angew. Math. \textbf{56}, 79--86 (1859)

\bibitem{Hermite2}
Hermite, C.: Sur l'\'equation de lam\'e.
\newblock Annali di Matematica Pura ed Applicata \textbf{9}, 21--24 (1878)

\bibitem{Hermite1}
Hermite, C.: Sur l'int\'egration de l'\'equation diff\'erentielles de lam\'e.
\newblock Journal f\"ur die reine und angewandte Mathematik \textbf{89}, 9--18
  (1880)

\bibitem{Heun}
Heun, K.: Zur {T}heorie der {R}iemann'schen {F}unctionen zweiter {O}rdnung mit
  vier {V}erzweigungspunkten.
\newblock Math. Ann. \textbf{33}(2), 161--179 (1888)

\bibitem{Hobson1}
Hobson, E.W.: The harmonic functions for the elliptic cone.
\newblock Proc. London Math Soc. \textbf{23}, 231--240 (1892)

\bibitem{Ince2}
Ince, E.L.: Further investigations into the periodic {L}am\'e functions.
\newblock Proc. Roy. Soc. Edinburgh \textbf{60}, 83--99 (1940)

\bibitem{Ince1}
Ince, E.L.: The periodic {L}am\'e functions.
\newblock Proc. Roy. Soc. Edinburgh \textbf{60}, 47--63 (1940)

\bibitem{IM}
Its, A.R., Matveev, V.B.: Schr\"odinger operators with the finite-gap spectrum
  and the {$N$}-soliton solutions of the {K}orteweg-de {V}ries equation.
\newblock Theoret. and Math. Phys. \textbf{23}, 343--355 (1975)

\bibitem{IS1999}
Ivey, T.A., Singer, D.: Knot types, homotopies and stability of closed elastic
  rods.
\newblock Proc. London Math. Soc. (3), 429--450  (1999)

\bibitem{Katz}
Katz, N.M.: Algebraic solutions of differential equations ({$p$}-curvature and
  the {H}odge filtration).
\newblock Invent. Math. \textbf{18}, 1--118 (1972)

\bibitem{Lame}
Lam\'e, G.: Sur les surfaces isothermes dans les corps solides homog\`enes en
  \'equilibre de temperature.
\newblock J. Math. Pures Appl. \textbf{2}, 147--188 (1837)

\bibitem{Lehner1964} Lehner, J.,: Discontinuous Groups and Automorphic Functions. 
\newblock Math. Surveys \& Mono., no. 8, Amer. Math. Soc. 1964.
\bibitem{Maier1}
Maier, R.S.: The 192 solutions of the {H}eun equation.
\newblock Math. Comp. \textbf{76}(258), 811--843 (2007)

\bibitem{Maier}
Maier, R.S.: {$P$}-symbols, {H}eun identities, and {${}_3F_2$} identities.
\newblock Contemp. Math. \textbf{471}, 139--159 (2008)

\bibitem{MS}
Matveev, V.B., Smirnov, A.O.: On the link between the {S}parre equation and
  {D}arboux-{T}reibich-{V}erdier equation.
\newblock Lett. Math. Phys. \textbf{76}(2-3), 283--295 (2006)

\bibitem{MT}
Milne-Thomson, L.M.: The {C}alculus of {F}inite {D}ifferences.
\newblock Macmillan and Co., Ltd., London (1951)

\bibitem{Novikov}
Novikov, S.P.: The periodic problem for the {K}orteweg-de {V}ries equation.
\newblock Functional Anal. Appl. \textbf{8}, 236--246 (1974)

\bibitem{Perron}
Perron, O.: Die {L}ehre von den {K}ettenbr\"uchen.
\newblock Chelsea Publishing Co., New York, N. Y. (1950).
\newblock 2d ed

\bibitem{Picard}
Picard, E.: Sur une application de la th\'eorie des fonctions elliptiques.
\newblock C. R. A. S. \textbf{89}, 74--76 (1879)

\bibitem{Poincare}
Poincar{\'e}, H.: Sur les {E}quations {L}ineaires aux {D}ifferentielles
  {O}rdinaires et aux {D}ifferences {F}inies.
\newblock Amer. J. Math. \textbf{7}(3), 203--258 (1885)

\bibitem{Poole}
Poole, E.G.C.: Introduction to the theory of linear differential equations.
\newblock Dover Publications, Inc., New York (1960)

\bibitem{Sparre}
de~Sparre, C.: Sur l'\'equation...
\newblock Acta Mathematica \textbf{3}, 105--140, 289--321 (1883)

\bibitem{Take}
Takemura, K.: The {H}eun equation and the {C}alogero-{M}oser-{S}utherland
  system. {I}. {T}he {B}ethe {A}nsatz method.
\newblock Comm. Math. Phys. \textbf{235}(3), 467--494 (2003)

\bibitem{Take2}
Takemura, K.: On the {H}eun equation.
\newblock Philos. Trans. R. Soc. Lond. Ser. A Math. Phys. Eng. Sci.
  \textbf{366}(1867), 1179--1201 (2008)

\bibitem{Take3}
Takemura, K.: The {H}ermite-{K}richever ansatz for {F}uchsian equations with
  applications to the sixth {P}ainlev\'e equation and to finite-gap potentials.
\newblock Math. Z. \textbf{263}(1), 149--194 (2009)

\bibitem{TV}
Treibich, A., Verdier, J.L.: Solitons elliptiques.
\newblock In: The {G}rothendieck {F}estschrift, {V}ol.\ {III}, pp. 437--480.
  Birkh\"auser Boston, Boston, MA (1990)

\bibitem{Verdier}
Verdier, J.L.: New elliptic solitons.
\newblock In: Algebraic analysis, {V}ol.\ {II}, pp. 901--910. Academic Press,
  Boston, MA (1988)

\bibitem{Veselov}
Veselov, A.P.: On {D}arboux-{T}reibich-{V}erdier potentials.
\newblock Lett. Math. Phys. \textbf{96}(1-3), 209--216 (2011)

\bibitem{WW}
Whittaker, E.T., Watson, G.N.: A {C}ourse of {M}odern {A}nalysis.
\newblock Cambridge University Press, Cambridge (1996)

\end{thebibliography}

\end{document}